\documentclass[12pt,a4paper]{amsart}
\usepackage{graphicx}
\usepackage{amssymb}
\usepackage{amsfonts}
\usepackage{amsmath}
\usepackage{array}
\usepackage{rotating}
\usepackage[matrix,frame]{xypic}

\usepackage{tikz}
\usetikzlibrary{positioning,arrows,matrix,chains,shadows,shapes,calc,trees}
\tikzstyle{Point} = [fill, radius=0.08]
\tikzset{baseline={([yshift=-3.5pt]current bounding box.center)}}
\tikzset{level distance = 0.7cm, sibling distance = 2em}
\tikzset{edge from parent/.style={
       draw,
       edge from parent path = {(\tikzparentnode) --
                                (\tikzchildnode)}
    }}
\tikzset{root/.style={}}
\tikzset{white/.style={}}

\def\arbsu#1#2#3#4#5#6{
\xymatrix@R=0.1cm@C=2mm{
 & & & & {\GrTeXBox{#1}}\arx1[llld]\arx1[rrrd]\\
& {\GrTeXBox{#2}}\arx1[dl]\arx1[dr] &&&&&&
  {\GrTeXBox{#3}}\arx1[dl]\arx1[dr] \\
{\GrTeXBox{}} && {\GrTeXBox{#4}}\arx1[dl]\arx1[dr]
  &&&& {\GrTeXBox{#5}}\arx1[dl]\arx1[dr] && {\GrTeXBox{}}\\
& {\GrTeXBox{#6}}\arx1[dl]\arx1[dr] && {\GrTeXBox{}} && {\GrTeXBox{}}
&& {\GrTeXBox{}} \\
{\GrTeXBox{}} && {\GrTeXBox{}} \\
}
}

\def\arbsd#1#2#3#4#5{
\newdimen\vcadre\vcadre=0.01cm 
\newdimen\hcadre\hcadre=0.01cm 
\xymatrix@R=0.1cm@C=2mm{
&&& {\GrTeXBox{#1}}\arx1[llld]\arx1[rrrd]\arx1[d]\\
{\GrTeXBox{}} &&&
  {\GrTeXBox{#2}}\arx1[dl]\arx1[dr] &&&
  {\GrTeXBox{#3}}\arx1[dl]\arx1[dr] \\
&& {\GrTeXBox{#4}}\arx1[dl]\arx1[dr]
  && {\GrTeXBox{}} & {\GrTeXBox{#5}}\arx1[dl]\arx1[dr] && {\GrTeXBox{}}\\
& {\GrTeXBox{}} && {\GrTeXBox{}} & {\GrTeXBox{}} && {\GrTeXBox{}} \\
}
}

\def\arbst#1#2#3#4#5{
\newdimen\vcadre\vcadre=0.01cm 
\newdimen\hcadre\hcadre=0.01cm 
\xymatrix@R=0.1cm@C=2mm{
&&&& {\GrTeXBox{#1}}\arx1[llld]\arx1[rrrd] \\
& {\GrTeXBox{#2}}\arx1[dl]\arx1[dr] &&&&&&
  {\GrTeXBox{#3}}\arx1[dl]\arx1[dr] \\
{\GrTeXBox{}} && {\GrTeXBox{#4}}\arx1[dl]\arx1[dr]\arx1[d]
  &&&& {\GrTeXBox{#5}}\arx1[dl]\arx1[dr] && {\GrTeXBox{}} \\
& {\GrTeXBox{}} & {\GrTeXBox{}} & {\GrTeXBox{}} &&& {\GrTeXBox{}}
&& {\GrTeXBox{}} \\
}
}

\def\arbsq#1#2#3#4#5{
\newdimen\vcadre\vcadre=0.01cm 
\newdimen\hcadre\hcadre=0.01cm 
\xymatrix@R=0.1cm@C=2mm{
 & & & & {\GrTeXBox{#1}}\arx1[llld]\arx1[rrrd]\\
& {\GrTeXBox{#2}}\arx1[dl]\arx1[dr] &&&&&&
  {\GrTeXBox{#3}}\arx1[dl]\arx1[d]\arx1[dr] \\
{\GrTeXBox{}} && {\GrTeXBox{#4}}\arx1[dl]\arx1[dr]
  &&&& {\GrTeXBox{}} & {\GrTeXBox{}} & {\GrTeXBox{}}\\
& {\GrTeXBox{#5}}\arx1[dl]\arx1[dr] &&\\
{\GrTeXBox{}} && {\GrTeXBox{}} \\
}
}

\def\arbsc#1#2#3#4{
\newdimen\vcadre\vcadre=0.01cm 
\newdimen\hcadre\hcadre=0.01cm 
\xymatrix@R=0.1cm@C=2mm{
&&& {\GrTeXBox{#1}}\arx1[llld]\arx1[rrrd]\arx1[d]\\
{\GrTeXBox{}} &&&
  {\GrTeXBox{#2}}\arx1[dl]\arx1[dr]\arx1[d] &&&
  {\GrTeXBox{#3}}\arx1[dl]\arx1[dr] \\
&& {\GrTeXBox{}} & {\GrTeXBox{}} & {\GrTeXBox{}}
 & {\GrTeXBox{#4}}\arx1[dl]\arx1[dr] && {\GrTeXBox{}}\\
&&&& {\GrTeXBox{}} && {\GrTeXBox{}} \\
}
}

\def\arbss#1#2#3#4{
\newdimen\vcadre\vcadre=0.01cm 
\newdimen\hcadre\hcadre=0.01cm 
\xymatrix@R=0.1cm@C=2mm{
&&& {\GrTeXBox{#1}}\arx1[llld]\arx1[rrrd]\arx1[d]\\
{\GrTeXBox{}} &&&
  {\GrTeXBox{#2}}\arx1[dl]\arx1[dr] &&&
  {\GrTeXBox{#3}}\arx1[dl]\arx1[d]\arx1[dr] \\
&& {\GrTeXBox{#4}}\arx1[dl]\arx1[dr]
  && {\GrTeXBox{}} & {\GrTeXBox{}} && {\GrTeXBox{}}\\
& {\GrTeXBox{}} && {\GrTeXBox{}} & {\GrTeXBox{}} && {\GrTeXBox{}} \\
}
}

\def\arbsse#1#2#3#4{
\newdimen\vcadre\vcadre=0.01cm 
\newdimen\hcadre\hcadre=0.01cm 
\xymatrix@R=0.1cm@C=2mm{
&&&& {\GrTeXBox{#1}}\arx1[llld]\arx1[rrrd] \\
& {\GrTeXBox{#2}}\arx1[dl]\arx1[dr] &&&&&&
  {\GrTeXBox{#3}}\arx1[dl]\arx1[d]\arx1[dr] \\
{\GrTeXBox{}} && {\GrTeXBox{#4}}\arx1[dl]\arx1[dr]\arx1[d]
  &&&& {\GrTeXBox{}} && {\GrTeXBox{}} \\
& {\GrTeXBox{}} & {\GrTeXBox{}} & {\GrTeXBox{}} &&& {\GrTeXBox{}}
&& {\GrTeXBox{}} \\
}
}

\def\arbsh#1#2#3{
\newdimen\vcadre\vcadre=0.01cm 
\newdimen\hcadre\hcadre=0.01cm 
\xymatrix@R=0.1cm@C=2mm{
&&& {\GrTeXBox{#1}}\arx1[llld]\arx1[rrrd]\arx1[d]\\
{\GrTeXBox{}} &&&
  {\GrTeXBox{#2}}\arx1[dl]\arx1[dr]\arx1[d] &&&
  {\GrTeXBox{#3}}\arx1[dl]\arx1[dr]\arx1[d] \\
&& {\GrTeXBox{}} & {\GrTeXBox{}} & {\GrTeXBox{}}
 & {\GrTeXBox{}} & {\GrTeXBox{}} & {\GrTeXBox{}}\\
}
}


\newenvironment{arb}{\begin{tikzpicture}[
scale=0.5,level distance=7mm,level 1/.style={sibling distance=8mm},level 2/.style={sibling distance=5mm},level 3/.style={sibling distance=5mm},grow'=down, font=\scriptsize]
\tikzstyle{ve}=[draw,circle,inner sep=1pt,fill] 
\tikzstyle{vv}=[draw,circle,inner sep=1pt] 
\tikzstyle{vee}=[minimum size=0pt ,inner sep=0pt]}{\end{tikzpicture}}

\newcommand{\vv}{node[vv] {}}
\newcommand{\ve}{node[ve] {}}

\def\arbA{{\scriptstyle \circ}}

\def\arbB{\begin{arb}
\node[ve] {}
child{\vv} child{\vv};
\end{arb}}

\def\arbCA{\begin{arb}
\node[ve] {}
child{\ve  child{\vv} child{\vv}} child{\vv};
\end{arb}}

\def\arbF{\begin{arb}
\node[ve] {}
child{\ve  child{\vv} child{\vv}} child{\ve  child{\vv} child{\ve  child{\vv} child{\vv} child{\vv} child{\vv}}} child{\vv};
\end{arb}}

 \headheight=8pt
\topmargin=0pt
 \textheight=624pt
\textwidth=432pt
 \oddsidemargin=18pt
\evensidemargin=18pt

\newtheorem{example}{Example}[section]
\newtheorem{note}[example]{Note}
\newtheorem{theorem}[example]{Theorem}

\newtheorem{definition}[example]{Definition}
\newtheorem{proposition}[example]{Proposition}

\newtheorem{lemma}[example]{Lemma}

\def\Proof{\noindent \it Proof -- \rm}
\def\qed{\hspace{3.5mm} \hfill \vbox{\hrule height 3pt depth 2 pt width 2mm}
\bigskip}
\def\K{{\mathbb K}}

\def\wt{{\rm wt}}
\def\SS{{\mathcal{S}}}
\def\PT{{\rm PT}}

\def\NC{{\rm NC}}

\def\<{\langle}
\def\>{\rangle}
\def\R{{\mathbb R}}
\def\C{\operatorname{\mathbb C}}

\def\S{{\bf S}}

\def\SG{{\mathfrak S}}
\def\H{{\mathcal H}}

\def\Sym{{\bf Sym}}

\def\Hnc{{\mathcal H}_{\rm ncdif}}
\def\NDPF{{\rm NDPF}}
\def\ev{{\rm ev}}
\def\ii{{\rm int}}

\def\Res{\operatorname{Res}}

\def\cut{\operatorname{cut}}
\def\Res{\operatorname{Res}}

\def\PST{{\rm PST}} 

%

\def\Tabvrule{\vrule width-0.4pt}       
\def\Tabhrule{\hrule \hrule height-0.4pt} 
\def\Tabstrut{\vrule height2.2ex 
                     depth0.8ex  
                     width0ex    
\relax}

\def\PasCase#1{\omit%
            $\vcenter{\hbox {\vbox to 0.4pt{}}
               \hbox{\makebox[3ex]{\Tabstrut$#1$}}}%
               \Tabvrule$}
\def\PasCasePoint{\PasCase{\cdot}}
\def\DessinCarre#1{%
    \vcenter{\hbox{}\hrule
             \hbox{\vrule\makebox[3ex]{\Tabstrut$#1$}\vrule}\Tabhrule}%
             \Tabvrule}
\def\GenRuban#1{\vcenter{\halign{&$\DessinCarre{##}$\cr#1}}\egroup}

\def\sTabvrule{\vrule width-0.4pt}
\def\sTabhrule{\hrule \hrule height-0.4pt}
\def\sTabstrut{\vrule height1.6ex depth0.6ex width0ex \relax}
\def\sDessinCarre#1{%
    \vcenter{\hbox{}\hrule
             \hbox{\vrule\makebox[2.3ex]%
                  {\sTabstrut$\scriptstyle#1$}\vrule}\sTabhrule}%
             \sTabvrule}
\def\sGenRuban#1{\vcenter{\halign{&$\sDessinCarre{##}$\cr#1}}\egroup}

\def\ruban{%
  \bgroup
  \let\ =\omit
  \let\\=\cr
  \let\x=\times
  \let\.=\PasCasePoint
  \offinterlineskip
  \GenRuban}

\def\sruban{%
  \bgroup
  \let\ =\omit
  \let\x=\times
  \let\\=\cr
  \offinterlineskip
  \sGenRuban}


\title[]%
{Free cumulants, Schr\"oder trees, and operads}

\author[M. Josuat-Verg\`es, F. Menous, J.-C.~Novelli and J.-Y.~Thibon]%
{Matthieu Josuat-Verg\`es, Fr\'ed\'eric Menous,\\ Jean-Christophe Novelli, and Jean-Yves Thibon}

\address[Menous]{Laboratoire de Math\'ematiques d'Orsay,\\ Univ.
Paris-Sud,\\ CNRS,\\ Universit\'e Paris Saclay,\\ B\^atiment 425,\\ 91405 Orsay Cedex,\\ FRANCE}

\address[Josuat-Verg\`es, Novelli, and Thibon] {Laboratoire d'Informatique Gaspard Monge,\\ Universit\'e Paris-Est Marne-la-Vall\'ee, \\
5 Boulevard Descartes, \\Champs-sur-Marne, \\77454 Marne-la-Vall\'ee cedex 2 \\
FRANCE}
\email[Matthieu Josuat-Verg\`es]{josuatv@univ-mlv.fr}
\email[Fr\'ed\'eric Menous]{frederic.menous@math.u-psud.fr}
\email[Jean-Christophe Novelli]{novelli@univ-mlv.fr}
\email[Jean-Yves Thibon]{jyt@univ-mlv.fr} 
\date{}

\begin{document}

\begin{abstract}
The functional equation defining the free cumulants in  free probability 
is lifted 
successively to the noncommutative Fa\`a di Bruno algebra, and then 
to the group of a free operad over Schr\"oder trees. This leads to new
combinatorial expressions, which remain valid for operator-valued
free probability. Specializations of these expressions give back 
Speicher's formula in terms of noncrossing partitions, and its interpretation
in terms of characters due to Ebrahimi-Fard and Patras.
\end{abstract}

\maketitle

\section{Introduction}

\subsection{Functional equations and combinatorial Hopf algebras}
Recent works on certain functional equations
involving reversion of formal power series  have revealed
that the appropriate setting for their combinatorial understanding  
involved a series of noncommutative generalizations, ending up as
an equation in the group of an operad.

Roughly speaking, this amounts to first interpreting the equation in the 
Fa\`a di Bruno Hopf algebra, lifting it to its noncommutative version, and
then replacing the constant term by a new indeterminate, giving rise to tree 
expanded series. 

This approach to functional inversion has been initiated  in \cite{NTPark,NTLag,NTDup},
and then extended in \cite{MNT} to deal with the conjugacy equation for formal
diffeomorphisms.

Free probability provides other examples of functional equations with a combinatorial solution. 
The relation between the moments and the free cumulants of a single random variable
is just a functional inversion, which can be treated combinatorially by the
formalism of \cite{MNT}. However, the case of several random variables is classically
formulated as a triangular system of equations which is solved by M\"obius inversion
over the lattice of noncrossing partitions \cite{Spei1,Spei2}.

We shall see that this system can be encoded by a single equation in the group of an operad.
This version encompasses the case of an operator valued probability. The solution
arises as a sum over reduced plane trees which reduces to Speicher's solution in the scalar case.
Also, our functional equation gives back that of Ebrahimi-Fard and Patras \cite{EFP}, which
interpret the series of the moments as a character of a Hopf algebra, and that of the free cumulants
as an infinitesimal character, both related by a dendriform exponential.
We have here a similar structure.  

\medskip
{\footnotesize
{\it Acknowledgements.-}
This research has been partially supported by
the project CARMA of the French Agence Nationale
de la Recherche.
}

\subsection{Classical probability}

The free cumulants $k_n$ of a probability measure $\mu$ on $\R$
are defined (see {\it e.g.,}~\cite{Spei1}) by means of the generating series
of its moments $m_n$
\begin{equation}
M_\mu(z) :=\int_\R\frac{\mu(dx)}{z-x}=z^{-1}+\sum_{n\ge 1}m_nz^{-n-1}
\end{equation}
as the coefficients of its compositional inverse 
\begin{equation}
K_\mu(z) :=M_\mu(z)^{\<-1\>}=z^{-1}+\sum_{n\ge 1}k_n z^{n-1}\,.
\end{equation}
 The formal series $M_\mu$ is called the Cauchy transform of $\mu$,
and $K_\mu$ its ${\mathcal R}$-transform. By abuse of language, we shall also say that
$K_\mu$ is the ${\mathcal R}$-transform of $M_\mu$.

It is in general instructive to interpret the coefficients of
a formal power series as the specializations of the elements of some 
generating family of the algebra of symmetric functions.

The classical algebra of symmetric functions, denoted by $Sym$ or $Sym(X)$, is
a free associative and commutative graded algebra with one generator in each degree:
\begin{equation}
Sym = \C[h_1,h_2,\ldots] = \C[e_1,e_2,\ldots] = \C[p_1,p_2,\ldots]
\end{equation}
where the $h_n$ are the complete homogeneous symmetric functions, the $e_n$ the elementary
symmetric functions, and the $p_n$ the power sums. 

In this context, it is the interpretation (in the notation of \cite{Mcd})
\begin{equation}
m_n=\phi(h_n)=h_n(X)
\end{equation}
which is relevant.
Indeed, the process of functional inversion (Lagrange
inversion) admits a simple expression within this formalism
(see~\cite{Mcd}, ex.~24 p.~35). If the symmetric functions
$h_n^*$ are defined by the equations
\begin{equation}
u=tH(t) \ \Longleftrightarrow \ t=uH^*(u)
\end{equation}
where $H(t):=\sum_{n\ge 0}h_nt^n$, $H^*(u):=\sum_{n\ge 0}h_n^*u^n$,
then, 
\begin{equation}
h_n^*(X)==\frac{1}{n+1} [t^n] E(-t)^{n+1}
\end{equation}
where $E(t)$ is defined by $E(t)H(t)=1$.
This defines an involution $f\mapsto f^*$ of the ring of symmetric
functions.
Now, if one sets $m_n=h_n(X)$ as above, then
$M_\mu(z)=z^{-1}H(z^{-1})=u$, so that 
\begin{equation}
z= K_\mu(u)=\frac{1}{u}E^*(-u) =
u^{-1}+\sum_{n\ge 1}(-1)^ne_n^* u^{n-1}\,,
\end{equation}
and finally
\begin{equation}  
k_n=(-1)^ne_n^*(X)\,.
\end{equation}
It follows  from the explicit formula (see~\cite{Mcd} p. 35)
\begin{equation}
-e_n^*=\frac1{n-1}\sum_{\lambda\vdash n}
\binom{n-1}{l(\lambda)} \binom{l(\lambda)}{m_1, m_2,\ldots,m_n} e_\lambda
\end{equation}
(where $\lambda=1^{m_1}2^{m_2}\cdots n^{m_n}$) that $-e_n^*$
is Schur positive, and moreover that $-e_n^*$ is the Frobenius characteristic
of a permutation representation $\Pi_n$, twisted by the sign character.
Let 
\begin{equation}
(-1)^{(n-1)}k_n = -e_n^*=:\omega(f_n)
\end{equation}
so that $f_n$ is the character of $\Pi_n$.
It is proved in \cite{NTPark} that $f_n$ is the characteristic
of the action of $\SG_n$ on prime parking functions.
These considerations can be extended to noncommutative symmetric functions,
the symmetric group being replaced by the $0$-Hecke algebra \cite{NTLag}.

\subsection{Free probability}

A {\it noncommutative probability space} is a pair $(A,\phi)$ where $A$ is a
unital algebra over $\C$ and $\phi$ a linear form on $A$ such that
$\phi(1)=1$ (see, {\it e.g.}, \cite{Spei1,Spei2}).

The {\it free moments} are the functions $m_n$ defined by
\begin{equation}
m_n[a_1,\ldots,a_n]=\phi(a_1\cdots a_n).
\end{equation}

The {\it free cumulants} $\kappa_n$ are defined by the implicit equations
\begin{equation}
\phi(a_1\cdots a_n)=\sum_{\pi\in\NC_n}\kappa_\pi[a_1,\ldots,a_n]
\end{equation}
where $\NC_n$ is the set of noncrossing partitions of $[n]$ and 
\begin{equation}
k_\pi[a_1,\ldots,a_n]=\prod_{B\in\pi}\kappa[B] \ \text{and for $B=\{b_1<\ldots < b_p\}$,}\ \kappa[B]=\kappa[b_1,\ldots,b_p]. 
\end{equation}
By M\"obius inversion over the lattice of noncrossing partitions, this yields
\begin{equation}  \label{kappaphi}
\kappa_\pi [a_1\cdots a_n]=\sum_{\sigma\le \pi}  \mu(\sigma,\pi)  \phi_\sigma[a_1,\ldots,a_n].
\end{equation}
where $\phi_\pi$ is defined similarly \cite{Spei1}.

\section{The Fa\`a di Bruno Hopf algebra and its noncommutative analogue}

\subsection{The Fa\`a di Bruno algebra and symmetric functions}

The algebra of symmetric functions is also a Hopf algebra.
Its usual bialgebra structure is defined by the coproduct
\begin{equation}
\Delta_0 h_n = \sum_{i=0}^nh_i\otimes h_{n-i}\quad (h_0=1)
\end{equation}
which allows to interpret it as the algebra of polynomial functions on the multiplicative group
\begin{equation}
G_0 = \{a(z)=\sum_{n\ge 0}a_n z^n\ (a_0=1)\}
\end{equation}
of formal power series with constant term 1: $h_n$ is the coordinate function 
\begin{equation}
h_n:\ a(z)\longmapsto a_n.
\end{equation}
Indeed, $h_n(a(z)b(z))=\Delta_0 h_n (a(z)\otimes b(z))$.

But $h_n$ can also be interpreted as a coordinate on the group
\begin{equation}
G_1 = \{A(z)=\sum_{n\ge 0}a_n z^{n+1}\ (a_0=1)\}
\end{equation}
of formal diffeomorphisms tangent to identity, under functional composition.
Again with $h_n(A(z))=a_n$ and $h_n(A(z)B(z))=\Delta_1(A(z)\otimes B(z))$,
the coproduct is now
\begin{equation}
\Delta_1 h_n = \sum_{i=0}^nh_i\otimes h_{n-i}((i+1)X)\quad (h_0=1)
\end{equation}
where $h_n(mX)$ is defined as the coefficient of $t^n$ in $(\sum h_kt^k)^m$.
The resulting bialgebra is known as the Fa\`a di Bruno algebra \cite{JR}.
By definition, its antipode sends $H(t)$ to its functional inverse.

\subsection{Noncommutative symmetric functions and noncommutative formal diffeomorphisms}

These constructions can be repeated word for word with the algebra
$\Sym$ of noncommutative symmetric functions. It is a free associative
(and noncommutative) graded algebra with one generator $S_n$ in each degree,
which can be interpreted as above if the coefficients $a_n$ belong to
a noncommutative algebra. In this case, $G_0$ is still a group, but $G_1$
is not, as composition is not anymore associative. However, the coproduct $\Delta_1$
\begin{equation}
\Delta_1 S_n = \sum_{i=0}^nS_i\otimes S_{n-i}((i+1)A)\quad (S_0=1)
\end{equation}
remains coassociative, and $\Sym$ endowed with this coproduct is a Hopf algebra,
known as Noncommutative Formal Diffeomorphims \cite{BFK,NTLag}, or as the noncommutative
Fa\`a di Bruno algebra \cite{EFLM}.

Let $\Hnc$ denote this Hopf algebra, and let $\gamma$ denote its antipode.
The image $h=\gamma(\sigma_1)$ 
of the formal sum of its generators
\begin{equation}
\sigma_1 = \sum_{n\ge 0}S_n
\end{equation}
is characterized by the functional equation
\begin{equation} 
h^{-1} = \sum_{n\ge 0}S_n h^n.
\end{equation}

The {\it noncommutative Lagrange series} $g$
is defined by the functional equation
\begin{equation}\label{eq:lag} 
g = \sum_{n\ge 0}S_n g^n
\end{equation}
Recall that for $f\in\Sym$, $f(-A)$ is the image
of $f$ by the automorphism $S_n\mapsto (-1)^n\Lambda_n$.
It is proved in \cite{NTLag} that $h(A)=g(-A)$, and that
\begin{equation}\label{eq:devlag}
g_n = \sum_{\pi\in\NDPF_n}S^{\ev(\pi)}
\end{equation}
where $\NDPF_n$ is the set of nondecreasing parking functions of length $n$,
{\it i.e.}, nondecreasing words over the positive integers such that $\pi_i\le i$,
and $\ev(\pi)=(|\pi|_i)_{i=1..n}$. The first terms are
\begin{equation}
\label{g01234}
\begin{split}
& g_0 =1,\qquad g_1 = S_1,\qquad g_2 = S_2 + S^{11}\,, \\
& g_3 = S_3 + 2S^{21} + S^{12} + S^{111}\,, \\
& g_4 = S_4 + 3S^{31} + 2S^{22} + S^{13} + 3S^{211} + 2S^{121} + S^{112} +
S^{1111}\,.
\end{split}
\end{equation}

There is a simple bijection between $\NDPF_n$ and $\NC_n$, and $g_n$ can as well
be written as a sum over noncrossing partitions.

\section{Noncommutative free cumulants}

In the case of a single random variable, the free cumulants $\kappa_n$ are the images 
of the noncommutative symmetric functions $K_n$ defined by the functional equation
\begin{equation}\label{ncfcum}
\sigma_1 = \sum_{n\ge 0}K_n \sigma_1^n
\end{equation}
by the character $\chi$ of $\Hnc$ such that $\chi(S_n)=m_n=\phi(a^n)$, where $a$
is some element of a noncommutative probability space $(A,\phi)$.

This equation is formally similar to \eqref{eq:lag}, so that
we can  write down immediately an expression of $S_n$ in terms
of the basis $K^I:=K_{i_1}\cdots K_{i_r}$, by replacing $S^I$ by
$K^I$ in the expression of $g_n$ given in \eqref{eq:devlag}:
\begin{align}
S_1 &= K_1\\
S_2 &= K_2+K^{11}\\
S_3 &= K_3+2K^{21}+K^{12}+K^{111} \\
S_4 &= K_4+3K^{31}+2K^{22}+3K^{211}+K^{13}+2K^{121}+K^{112}+K^{1111}.
\end{align}
These expressions are sums over Catalan sets 
\begin{equation}
S_n =\sum_{\pi\in\NDPF_n}K^{\ev(\pi)}
\end{equation}
in the guise of nondecreasing parking functions, instead of noncrossing
partitions. 

Solving recursively for $K_n$, we find, in various bases of $\Sym$
\begin{align}
K_1 &= S_1 = \Lambda_1\\
K_2 &= S_2-S^{11} =- \Lambda_2=-R_{11}\\
K_3 &= S_3-2S^{21}-S^{12}+2S^{111} = \Lambda_3+\Lambda^{21}=R_{12}+2R_{111}\\
K_4 &= 
S_4-S^{13}-3(S^{31}-S^{121})-2(S^{22}-S^{112})+5(S^{211}-S^{1111})\\
    &= -\Lambda_4 -2\Lambda^{31}-\Lambda^{22}-\Lambda^{211} 
= -(R_{13}+2R_{121}+3R_{112}+5R_{1111}).
\end{align}

On these examples, $(-1)^{n-1}K_n$ appears to be given by the following rule:
start from the expression of $g_{n-1}$ on the $S^I$, add $1$ to the first parts,
and replace $S$ with $\Lambda$. In other words,
\begin{equation}
K_n = -(\Omega g_{n-1})(-A)
\end{equation}
where $\Omega$ is the linear operator defined in \cite{NTPark}
\begin{equation}
\Omega S^{i_1,\ldots,ir} = S^{i_1+1,i_2,\ldots,i_r} \quad \text{and $\Omega(1)=S_1$.}
\end{equation}
It is proved in \cite{NTPark} that
\begin{equation}
g^{-1} = 1-\Omega g,
\end{equation}
and we have indeed
\begin{theorem}
\begin{equation}\label{eq:Kg}
K := 1+\sum_{n\ge 1}K_n = g^{-1}(-A). 
\end{equation}
\end{theorem}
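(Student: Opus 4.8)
The plan is to verify that the right-hand side of \eqref{eq:Kg} satisfies the functional equation \eqref{ncfcum} and then to invoke the uniqueness of its solution. First I would put the input $g^{-1}=1-\Omega g$ in the form actually needed: multiplying \eqref{eq:lag} on the right by $g^{-1}$ gives $\sum_{n\ge 0}S_n g^{n-1}=1$, hence $g^{-1}=1-\sum_{n\ge 1}S_n g^{n-1}$, which is precisely $1-\Omega g$ since $\Omega$ raises by one the index of the first letter of a word and $\Omega(1)=S_1$. In particular the degree-$n$ homogeneous component of $g^{-1}$ equals $-\Omega g_{n-1}$ for $n\ge 1$, so $g^{-1}(-A)=1-\sum_{n\ge 1}(\Omega g_{n-1})(-A)$; thus \eqref{eq:Kg} is equivalent to the family of identities $K_n=-(\Omega g_{n-1})(-A)$, that is, to $K(-A)=g^{-1}$.

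I would then apply the automorphism $f\mapsto f(-A)$ of $\Sym$ to \eqref{ncfcum}. By the relation $\sum_{i+j=n}(-1)^i\Lambda_i S_j=\delta_{n,0}$ this automorphism sends $\sigma_1=\sum_{n\ge 0}S_n$ to $\sigma_1^{-1}$, so \eqref{ncfcum} becomes
\[
\sigma_1^{-1}=\sum_{n\ge 0}K_n(-A)\,\sigma_1^{-n}.
\]
Since \eqref{ncfcum} (equivalently, this transformed equation) determines the homogeneous components of $K$ recursively, one degree at a time, it suffices to produce a single solution; I would check that $K_n(-A)=(g^{-1})_n$ works, i.e. that
\[
\sigma_1^{-1}=\sum_{n\ge 0}(g^{-1})_n\,\sigma_1^{-n}.
\]
Granting this, uniqueness forces $K(-A)=g^{-1}$, whence $K=g^{-1}(-A)$. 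By reorganising (multiplying through by powers of $\sigma_1$, reindexing, and using $g^{-1}=1-\Omega g$) this identity is equivalent to $\sigma_1=\sum_{n\ge 0}g_n\,\sigma_1^{-n}$, and, after applying the substitution endomorphism $S_n\mapsto g_n$ (which sends $\sigma_1$ to $g$ and $K_n$ to $S_n$, by \eqref{eq:lag} and \eqref{ncfcum}), to the cleaner-looking $K=\sum_{n\ge 0}S_n K^{-n}$; the unique solution of the last equation has $K^{-1}=\gamma(\sigma_1)=g(-A)$, recovering the claim.

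The core of the proof, and the step I expect to be the main obstacle, is this single identity in any of its equivalent forms. It is the noncommutative lift of an elementary Lagrange-inversion fact (commutatively, if $\mathbf g(z)=H(z\mathbf g(z))$ then setting $z'=z/H(z)$ gives $z'\mathbf g(z')=z$, hence $\mathbf g(z')=H(z)$), but over $\widehat\Sym$ it must be carried out with care, since $g$ does not commute with $\sigma_1$ and the sides on which the coefficients sit relative to the powers of $\sigma_1$ genuinely matter. I would establish it by noncommutative Lagrange inversion, substituting the graded form $g_n=\sum_m S_m\,(g^m)_{n-m}$ of \eqref{eq:lag} into the right-hand side and keeping track of the order of every product; the relevant inversion machinery is that of \cite{NTLag}. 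The remaining ingredients — the reduction through $\Omega$, the $(-A)$-symmetry, and the two uses of uniqueness — are formal.
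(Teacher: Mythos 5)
Your reductions are sound as far as they go: the passage to $K(-A)=g^{-1}$ via the involution $A\mapsto -A$, the observation that \eqref{ncfcum} determines $K$ degree by degree so that exhibiting one solution suffices, and the chain of equivalent reformulations ending at $K=\sum_{n\ge 0}S_nK^{-n}$ are all correct (up to a harmless slip: to pass from $\sigma_1=\sum_n g_n\sigma_1^{-n}$ to $K=\sum_n S_nK^{-n}$ you must apply the \emph{inverse} of the substitution $S_n\mapsto g_n$, not the substitution itself). The problem is that every link in this chain is an equivalence, so you have only shown that the theorem is equivalent to the single identity $\sigma_1=\sum_{n\ge 0}g_n\,\sigma_1^{-n}$ (equivalently, that $K^{-1}$ satisfies the equation $h^{-1}=\sum_n S_nh^n$ characterizing $\gamma(\sigma_1)$) --- and that identity, which you yourself flag as ``the main obstacle,'' is never proved. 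Worse, the method you propose for it is circular as stated: writing $T(f)=\sum_k f_k\sigma_1^{-k}$, substitution of $g_n=\sum_m S_m(g^m)_{n-m}$ gives $T(g)=\sum_m S_m\,T(g^m)\,\sigma_1^{-m}$, and to conclude $T(g)=\sigma_1$ you need $T(g^m)=\sigma_1^m$, which in turn (via $T(fg)=\sum_i f_i\,T(g)\,\sigma_1^{-i}$) requires $T(g)=\sigma_1$. This can be repaired by a simultaneous induction on the degree (the occurrences of $T(g^m)$ are only needed in degrees strictly below the one being treated, since $S_m$ has positive degree for $m\ge 1$), but you neither notice the circularity nor supply the induction, and ``keeping track of the order of every product'' is not a substitute for it.

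For comparison, the paper sidesteps this computation entirely: it applies the defining property of the antipode of $\Hnc$ to $\lambda_{-1}$, computing $\Delta_1(\lambda_{-1})=\sum_k(-1)^k\Lambda_k\otimes\sigma_1^{k-1}$ and deducing $\sigma_1=\sum_k(-1)^k\gamma(\Lambda_k)\sigma_1^{k}$ directly; uniqueness of the solution of \eqref{ncfcum} then identifies $K_n=(-1)^n\gamma(\Lambda_n)$, whence $K=\gamma(\lambda_{-1})=\gamma(\sigma_1)^{-1}=g^{-1}(-A)$ by the cited fact $\gamma(\sigma_1)=g(-A)$. In other words, the identity you are missing is exactly what the antipode axiom hands you for free; either adopt that argument or carry out the degree induction explicitly.
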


\Proof Let $\gamma$ be the antipode of $\Hnc$. It is proved in \cite{NTPark}
that $\gamma(\sigma_1)=g(-A)$. Thus, \eqref{eq:Kg} is equivalent to
 \begin{equation}
K = \gamma(\lambda_{-1}).
\end{equation}
We have
\begin{multline}
\Delta_1(\lambda_{-1}) = (\Delta_1\sigma_1)^{-1} =\left(\sum_{k\ge 0}S_k\otimes \sigma_1^{k+1} \right)^{-1}\\
 =\left(\sum_{k\ge 0}S_k\otimes \sigma_1^{k} \right)^{-1}(1\otimes \sigma_1^{-1})
=\sum_{k\ge 0}(-1)^k\Lambda_k\otimes \sigma_1^{k-1}
\end{multline}
so that, by definition of an antipode,
\begin{equation}
1 = \sum_{k\ge 0}(-1)^k\gamma(\Lambda_k)\sigma_1^{k-1}
\end{equation}
and finally
\begin{equation}
\sigma_1 = \sum_{k\ge 0}(-1)^k\gamma(\Lambda_k)\sigma_1^{k}
\end{equation}
which implies the identification
 \begin{equation}
K_n = (-1)^n\gamma(\Lambda_n).
\end{equation}
\qed

This result implies an expression of $(-1)^{n-1}K_n$ on the ribbon basis:
start from the expression of $g_{n-1}$, {\it e.g.,} for $n=4$
\begin{equation}
g_3 = 5R_3+3R_{21}+2R_{12}+R_{111},
\end{equation}
replace each $R_I$ by $R_{\overline I^\sim}$ (mirror conjugate
composition)
\begin{equation}\label{eq:g3R}
g_3 \rightarrow  5R_{111}+3R_{12}+2R_{21}+R_{3},
\end{equation}
and insert $1$ at the beginning of each composition 
 \begin{equation}\label{eq:K4R}
-K_4=  5R_{1111}+3R_{112}+2R_{121}+R_{13}.
\end{equation}

On the $S$-basis, we have for example
\begin{equation}
K_4 = (S^4-S^{13}) -3(S^{31}-S^{121}) -2(S^{22}-S^{112})+5(S^{211}-S^{111})
\end{equation}
which is obtained from  \eqref{eq:g3R} or \eqref{eq:K4R}
by observing that
\begin{equation}
R_{1I} = S_1R_I-\Omega R_I.
\end{equation}

\section{Free cumulants in the Schr\"oder operad} 

\subsection{The Schr\"oder operad and its group}

To obtain a combinatorial expression for $K_n$, 
one can work in the group
of the Schr\"oder operad as in \cite[Section 10.2]{MNT}. This will 
cover the case of several random variables, hence imply
Speicher's formula with noncrossing partitions, as well as the case
of an arbitrary operator valued probability $\phi$ (see, {\it e.g.}, 
\cite{Dy,Spei2}).

Let $\PT_n$ be the set of reduced plane trees, {\it i.e.}, plane
trees for which any internal node has at least two descendants.
The {\it Schr\"oder operad} \cite{MNT} is the $\C$-vector space
\begin{equation}
\SS = \bigoplus_{n \ge 1}\SS_n,\quad \text{where}\ \SS_n =\C \PT_{n}
\end{equation}
endowed with the composition operations
\begin{equation}
\SS_n \otimes \SS_{k_1} \otimes \ldots
\otimes \SS_{k_n} 
\longrightarrow \SS_{k_1 + \ldots + k_n}  
\text{ ($n \geqslant 1$, $k_i \geqslant 1$)}
\end{equation}
which map the tensor product of trees $t_0 \otimes t_1 \otimes \ldots \otimes t_n$
to the tree $t_0 \circ (t_1, \ldots, t_n)$ obtained by replacing the leaves of
$t_0$, from left to right, by the trees $t_1, \ldots, t_n$.

The number of leaves of a tree $t$ will be called its degree $d(t)$, and  we
define the weight $\wt(t)$ of a tree as its degree minus 1.

We can represent trees by noncommutative monomials in indeterminates $S_n$ ($n\ge 0$),
by interpreting a node of arity $k$ as a $k$-ary operator denoted by $S_{k-1}$, and
writing the resulting expression in Polish notation. 

 For example,
\begin{equation}
S^{\arbF}=
S_2S_0S_1S_3S_0S_0S_0S_0S_0S_1S_0S_0=S^{201300000100}
\end{equation}
is of degree 8 and weight $7=2+1+3+1$.
The sequence of exponents $I$ is called a {\it Schr\"oder pseudocomposition}.
The sum of the components of $I$ is therefore the weight of the associated tree.
We shall indifferently use the notations $S^I$ or $S^t$.  
Let $\hat{\SS}$ be the completion of the vector space 
$\SS$ with respect to the weight $\wt(t)=d(t)-1$.
The group of the operad $\SS$ is defined as \cite{KM,Cha}
\begin{equation}
 G_{\SS} = \left\{ \arbA + \sum_{n \geqslant 2} p_n ,
   \hspace{1em} p_n \in \SS_n \right\} \subset \hat\SS
\end{equation}
endowed with the composition product 
\begin{equation}\label{eq:opcomp}
 p \circ q = q + \sum_{n \geqslant 2} p_n \circ \left(
   \underset{n}{\underbrace{q, \ldots, q}} \right) \in G_{\SS} 
\end{equation}
for $p = \arbA + \sum_{n \geqslant 2} p_n$ and $q \in G_{\SS}$. 

Elements of $G_{\SS}$  can be described by their coordinates
\begin{equation}
p = \sum_{t \in \text{PT}} p_t t\quad \text{and}\quad q
  = \sum_{t \in \text{PT}} q_t t \quad (\text{PT}=\cup_{n\geq 1} \text{PT}_n).
\end{equation}
(with $q_{\arbA} = p_{\arbA} = 1$) so that the coordinates of $r = p \circ q$
are given by
\begin{equation}
r_t = \sum_{ t = t_0 \circ (t_1, \ldots, t_n)} p_{t_0} q_{t_1} \ldots q_{t_n}
\end{equation}

This allows to consider the group $G_\SS$ as the group of characters of a graded Hopf algebra $H_\SS$.
It is the noncommutative polynomial algebra over reduced plane trees $\text{PT}$ (with unit $\arbA$), 
endowed with the coproduct given by {\it admissible cuts} (see \cite{MEF,MNT}): 
an admissible cut of a tree $T$ is a possibly empty subset of internal vertices $c=\lbrace i_1,\dots, i_k \rbrace$ 
such that along any path from the root to a leaf, there is at most one internal vertex in $c$. For any such cut, one defines 
\begin{itemize}
\item $P^c(T)=T_{i_1}\dots T_{i_k}$ as the product of the subtrees of $T$ having their root in $c$, 
ordered as in $T$ from the top and from left to right.
\item $R^c(T)$ as the trunk which remains after removing these trees.
\end{itemize} 
The coproduct of  $H_\SS$ is 
\begin{equation}
\Delta (T)=\sum_c R^c(T) \otimes P^c(T)
\end{equation}
so that $H_\SS$ is a graded Hopf algebra. For instance 
\begin{equation}
{\Delta} \left( \arbCA \right)
 = \arbA \otimes \arbCA + \arbB \otimes \arbB  +\arbCA \otimes \arbA.
\end{equation}

The bijection between $G_\SS$ and the group of characters on $H_\SS$ is obvious: 
since $H_\SS$ is a polynomial algebra, a character $\chi$ is entirely determined by its restriction 
to trees of positive weight, in other words, by its \textit{residue} in the sense of \cite{EFP1} 
(which can be considered as an infinitesimal character $\Res(\chi)$) 
and the values of this residue are given by the  coordinates in $G_\SS$. 

%
%
%
%
%
%
%

\subsection{Operadic free cumulants}


Consider the series of corollas
\begin{equation}
f_c = S_{0} + \sum_{n \geqslant 1} S_nS_0^{n+1} . 
\end{equation}
The inverse of $f_c$ in $G_\SS$ is, in terms of trees,
\begin{equation} 
g_c = \sum_{t \in \text{PT}} (- 1)^{i (t)} S^t 
\end{equation}
where $i(t)$ denotes the number of internal nodes of $t$.
Indeed, denoting by $\wedge(t_1,\ldots,t_n)$ the tree whose
subtrees of the root are $t_1,\ldots,t_n$,
\begin{align} 
     g_c & =  S_0 +\displaystyle \sum_{n \geqslant 1} 
            \sum_{ t = \bigwedge (t_1 \cdot \ldots \cdot t_{n + 1})\atop t_i \in \text{PT}} 
                    (- 1)^{i (t)} S^{\bigwedge (t_1 \cdot \ldots \cdot t_{n + 1})}\\
     & =  S_0 +\displaystyle  \sum_{n \geqslant 1} 
\sum_{t = \bigwedge (t_1 \cdot \ldots \cdot t_{n + 1})\atop t_i \in \text{PT}} 
(- 1)^{1 + i (t_1) + \ldots i (t_{n + 1})} S_n S^{t_1} \ldots S^{t_{n + 1}}\\
     & =  S_0 - \sum_{n \geqslant 1} S_n g_c^{n + 1}
\end{align}
so that 
\begin{equation}
S_0 = g_c + \sum_{n \geqslant 1} S_n g_c^{n + 1} = f_c \circ g_c.
\end{equation}

\begin{definition}
A Schr\"oder tree is {\it prime} if its rightmost subtree is a leaf.
We denote by $\PST_n$ the set of prime Schr\"oder trees of weight $n$.
\end{definition}

Prime Schr\"oder trees are counted by the large Schr\"oder numbers. Indeed,
if $s(x)=1+x+3x^2+11x^3+45x^4+\cdots$ is the generating series of Schr\"oder trees
of weight $n$, then, that of prime Schr\"oder trees is
\begin{equation}
p(x) = 1+\frac{xs(x)}{1-xs(x)}= \frac{4}{3-x+\sqrt {1-6\,x+{x}^{2}}} = 
1+x+2{x}^{2}+6{x}^{3}+22{x}^{4}+\cdots
\end{equation}

The series $g_c$, introduced in  \cite[Eq. (158)]{MNT}, projects onto the antipode
$g(-A)$ of $\Hnc$ by the map $S_0\mapsto 1$. 
Imitating the interpretation of the Lagrange series
given in  \cite[Eq. (164)]{MNT}  and exchanging the roles of $g_n,K_n$ and $S_n$ as above,
we obtain the following result.

\begin{theorem}\label{th:kap}
Define $\eta$ by
\begin{equation}\label{eq:67}
g_c = \eta\cdot S_0, 
\end{equation}\label{eq:68}
and $\kappa$ by
\begin{equation}
\kappa := \eta^{-1}\cdot S_0
\end{equation}
where the exponent $-1$ denotes here the multiplicative inverse. 
Then, the image of $\kappa$ by the algebra morphism $S_0\mapsto 1$
is the series $K$ of  $\Sym$.
In terms of trees,
\begin{equation}\label{eq:PST}
\kappa_n = \sum_{t\in\PST_n} (-1)^{i(t)-1}S^t.
\end{equation}
\end{theorem}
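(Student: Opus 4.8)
The plan is to exploit the fact that $g_c$ is, up to the trailing factor $S_0$, precisely the object whose image under $S_0\mapsto 1$ is $g(-A)=\gamma(\sigma_1)$, so that the operadic inversion producing $\kappa$ mirrors exactly the passage from $g$ to $K=g^{-1}(-A)$ carried out in Theorem~\eqref{eq:Kg}. Concretely, I would first rewrite the two defining relations \eqref{eq:67}--\eqref{eq:68} as the single functional equation satisfied by $\kappa$ in $G_\SS$: since $g_c=\eta\cdot S_0$ and $\kappa=\eta^{-1}\cdot S_0$, one gets $g_c\cdot(\text{something})=\kappa^{-1}\cdot(\text{something})$ after cancelling the common multiplicative factor $S_0$; unwinding the operadic composition $f_c\circ g_c=S_0$ established just above then yields a closed relation of the form $\kappa = S_0 + \sum_{n\ge 1}(\text{corolla }S_n)\cdot(\cdots)$ analogous to \eqref{ncfcum}. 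This is the operadic lift of $\sigma_1=\sum K_n\sigma_1^n$, with the role of $\sigma_1$ now played by $S_0$ and the multiplication being the one in the associative algebra $\hat\SS$.

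Next I would verify that the morphism $\varphi\colon S_0\mapsto 1$ is compatible with everything in sight: it is an algebra morphism $\hat\SS\to\Sym$ (sending the corolla $S_n$ to the noncommutative symmetric function $S_n$, and killing the placeholder $S_0$), it sends $g_c$ to $\gamma(\sigma_1)=g(-A)$ as asserted in the paragraph citing \cite[Eq.~(158)]{MNT}, and it intertwines the operadic/associative inversion with the inversion in $\Sym$. Applying $\varphi$ to the functional equation for $\kappa$ therefore produces exactly the equation \eqref{ncfcum} defining $K_n$, so by uniqueness of the solution $\varphi(\kappa)=K$, which is the first assertion.

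For the tree formula \eqref{eq:PST}, I would argue combinatorially on $\kappa=\eta^{-1}\cdot S_0$. From $g_c=\sum_{t}(-1)^{i(t)}S^t$ we read off $\eta$ by stripping the rightmost leaf: $\eta$ is supported on those trees whose rightmost subtree is a leaf — i.e.\ on prime Schr\"oder trees — with $\eta=\sum_{t \text{ prime}}(-1)^{i(t)}S^{t'}$ where $t'$ is $t$ with its last leaf removed. Then I must compute the multiplicative inverse $\eta^{-1}$ in the completed algebra and re-attach $S_0$. The cleanest route is to show directly that the series $\kappa'_n:=\sum_{t\in\PST_n}(-1)^{i(t)-1}S^t$ satisfies the same functional equation as $\kappa_n$ (degree by degree, weight by weight), using the recursive decomposition of a prime Schr\"oder tree of weight $n$ according to the subtrees hanging from its root — exactly the $\wedge(t_1,\dots,t_{n+1})$ expansion already used to compute $g_c$, but now constrained so that $t_{n+1}$ is forced to end in a leaf while $t_1,\dots,t_n$ range over all Schr\"oder trees. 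Matching the signs $(-1)^{i(t)-1}=(-1)^{1+i(t_1)+\cdots+i(t_{n+1})-1}=(-1)^{i(t_1)+\cdots+i(t_{n+1})}$ against the sign bookkeeping in the recursion for $\kappa$ closes the induction.

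The main obstacle is the bookkeeping in the second half: one must be scrupulous about how the trailing $S_0$ interacts with the multiplicative inversion — that is, verifying that ``prime'' (rightmost subtree a leaf) is exactly the support condition surviving after forming $\eta^{-1}\cdot S_0$, and that no cancellation collapses distinct prime trees. I expect the sign identity $(-1)^{i(t)-1}$ to fall out automatically once the recursion is set up correctly, since internal nodes are additive under the $\wedge$ operation and the ``$-1$'' tracks the single extra internal node being the root; the genuine care is in the algebra of the completion, ensuring convergence and associativity so that ``$\eta^{-1}$'' is well-defined and the functional equation can be manipulated term by term.
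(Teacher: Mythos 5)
Your first paragraph (the projection onto $K$) is fine and is essentially how the paper sees it: since $g_c=\eta S_0$ projects to $g(-A)$ under $S_0\mapsto 1$, the image of $\eta$ is $g(-A)$, hence the image of $\kappa=\eta^{-1}S_0$ is $g(-A)^{-1}=g^{-1}(-A)=K$ by the earlier theorem. For the tree formula, your route (invert $\eta$ directly in the completed algebra and match against the root decomposition of prime trees) is genuinely different from the paper's and can be made to work, but as written it contains one false claim and omits the decisive computation. The false claim is that ``$\eta$ is supported on prime Schr\"oder trees'': every tree's Polish word ends in $S_0$ (its rightmost leaf), so $\eta$ is obtained from $g_c$ by stripping that final letter from \emph{every} monomial, and non-prime trees contribute just as well (e.g.\ $S^{10100}$ contributes a term $S^{1010}$ to $\eta$). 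It is $\kappa=\eta^{-1}S_0$, not $\eta$, whose support is $\PST$ --- which is exactly what you are trying to prove, so nothing can be read off at that stage. The omitted computation is the functional equation your candidate $\kappa'$ is supposed to satisfy; you never actually write it down, and ``the recursion for $\kappa$'' against which you propose to match signs is left undefined.

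To close your argument: right-cancel the trailing $S_0$ in the relation $g_c=S_0-\sum_{n\ge 1}S_n g_c^{n+1}=S_0-\sum_{n\ge1}S_ng_c^{\,n}\,\eta S_0$ (legitimate, since appending a fixed letter to a word is injective), giving $\eta=1-\bigl(\sum_{n\ge1}S_ng_c^{\,n}\bigr)\eta$, whence $\eta^{-1}=1+\sum_{n\ge1}S_ng_c^{\,n}$ and $\kappa=S_0+\sum_{n\ge1}S_ng_c^{\,n}S_0$. Expanding each factor $g_c$ as its signed sum of trees, the monomial $S_nS^{t_1}\cdots S^{t_n}S_0$ is exactly $S^t$ for the prime tree $t=\wedge(t_1,\dots,t_n,\arbA)$, with sign $(-1)^{i(t_1)+\cdots+i(t_n)}=(-1)^{i(t)-1}$; this is precisely your root decomposition, and it is a bijection onto $\PST$. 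For comparison, the paper instead derives the fixed-point equation $f_c=\kappa\dashv f_c$ (substitute $f_c$ into every leaf but the rightmost) and verifies that the prime-tree sum satisfies it by grafting corollas and observing that every non-corolla tree receives total coefficient $(1-1)^n$. That equation is not just a proof device: it is the form reused later to make contact with the dendriform formulation of Ebrahimi-Fard and Patras, so your shortcut, while arguably cleaner for \eqref{eq:PST} itself, would not replace it.
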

\begin{proof}
For an element $f$ of the Schr\"oder group $G_S$, write $f=\tilde fS_0$,
and for $f,g\in G_\SS$, define
\begin{equation}
f\dashv g=    (\tilde{f}\circ g)S_0 =S_0 +((\tilde{f}-1)\circ g)  S_0
\end{equation}
This is a partial composition:
if \begin{equation} 
g=\arbA + \sum_{n \geqslant 2} g_n\ \text{and}\  f=\arbA + \sum_{n \geqslant 2} f_n, 
\end{equation}
then
\begin{equation}
f\dashv g = \arbA +\sum_{n\geq 2} f_n(
   \underset{n-1}{\underbrace{g, \ldots, g}},\arbA )
\end{equation}
From \eqref{eq:67} and \eqref{eq:68}, we have
\begin{equation}
\tilde\kappa g_c = S_0\ \text{and}\ (\tilde\kappa\circ f_c)S_0 = f_c,
\end{equation} 
so that
\begin{equation}\label{eq:kappa}
f_c = \kappa\dashv f_c
\end{equation} 
which implies \eqref{eq:PST}. Indeed, plugging $f_c$ in this
expression, we get an alternating sum of trees obtained by grafting
corollas to leaves of prime Schr\"oder trees $t$ except to the rightmost one.
The sign of the resulting tree $t'$ is $(-1)^{i(t)-1}=(-1)^{i(t')-k-1}$ if $k$ is the
number of grafted corollas. Hence, each $t'$ which is not a corolla has coefficient
$(1-1)^n$ where $n$ is the number of its internal nodes whose all descendants are leaves.
\end{proof}

For example
\begin{eqnarray}\label{ex:kappa}
\kappa &=& [
1-(S^{10}+S^{200}-S^{1100}-S^{1010} + S^{3000} -S^{21000}-S^{20100}-S^{20010}-S^{12000}\nonumber\\
&-&S^{10200} +S^{111000}+S^{110100}+S^{101100}+S^{101010}+S^{110010})+\cdots) ]^{-1}S_0\nonumber\\
&=& S_0 +S^{100}+S^{2000}-S^{11000}  \nonumber\\
&&+ S^{30000}-S^{210000}-S^{201000}-S^{120000}+S^{111000}+S^{1101000}+\cdots
\end{eqnarray}

Our formula is  multiplicity-free, and the number of terms is given by the
large Schr\"oder numbers. 
This is also the sum of the absolute values
of the M\"obius function
of the lattice of noncrossing partitions.   We shall in the sequel give a combinatorial interpretation
of this coincidence, and explain how it allows to recover Speicher's formula.

On the example we shall see that
this expression is finer than Speicher's  formula 
as the term $2\phi(a_1)\phi(a_2)\phi(a_3)$ is now separated into two binary trees. Actually,
this will allow to take into account the case where $\phi$ is valued in a noncommutative
algebra.

\subsection{The operadic ${\mathcal R}$-transform}
Define the complementary operation $\vdash$ by
\begin{equation}
f\vdash g = \arbA +\sum_{n\geq 2} f_n(
   \underset{n-1}{\underbrace{\arbA, \ldots, \arbA}},g )
\end{equation}
so that
\begin{equation}\label{dashrel}
f\circ g = (f\dashv g)\vdash g.
\end{equation}
We can define the {\it operadic ${\mathcal R}$-transform} of a series $f$ by 
\begin{equation}
{\mathcal R}(f) = \left(f^{\circ-1}\right)^{\vdash -1}.
\end{equation}
Applying \eqref{dashrel}, we have
\begin{equation}
(f\dashv f^{\circ-1})\vdash f^{\circ-1} = f\circ  f^{\circ-1} = S_0,
\end{equation}
so that
\begin{equation}
f^{\circ-1} = (f\dashv f^{\circ -1})^{\vdash -1}
\end{equation}
and finally
\begin{equation}
{\mathcal R}(f) = (f^{\circ-1})^{\vdash -1} = f\dashv f^{\circ-1}.
\end{equation}

Moreover, we have obviously the so-called right dipterous relation
\begin{equation}
(f\dashv g)\dashv h = f\dashv (g\circ h).
\end{equation}
Applying it with $g=f^{\circ-1}$ and $h=f$, we obtain
\begin{equation}
(f\dashv f^{\circ-1})\dashv f = f\dashv S_0 = f,
\end{equation}
that is:

\begin{proposition}
For any $f\in G_\S$, the ${\mathcal R}$-transform $k$ of $f$ satisfies
\begin{equation}
f = k\dashv f.
\end{equation} 
\end{proposition}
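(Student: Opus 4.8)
The plan is to derive the identity $f = k \dashv f$ directly from the definition ${\mathcal R}(f) = k = f \dashv f^{\circ-1}$, using the right dipterous relation that has just been established. Indeed, the proof is essentially contained in the computation immediately preceding the statement: starting from $k = f \dashv f^{\circ-1}$, apply $(f \dashv g) \dashv h = f \dashv (g \circ h)$ with $g = f^{\circ-1}$ and $h = f$ to get
\begin{equation}
k \dashv f = (f \dashv f^{\circ-1}) \dashv f = f \dashv (f^{\circ-1} \circ f) = f \dashv S_0 = f,
\end{equation}
where the last step uses that $S_0$ is the unit for $\circ$ and that $f \dashv S_0 = f$ (which follows from the formula $f \dashv g = \arbA + \sum_{n\geq 2} f_n(g,\ldots,g,\arbA)$ evaluated at $g = \arbA = S_0$, since grafting $\arbA$ to the first $n-1$ leaves does nothing). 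So the entire argument is two lines.

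First I would recall the two facts proved just above: the right dipterous relation $(f\dashv g)\dashv h = f\dashv(g\circ h)$, and the identity ${\mathcal R}(f) = f\dashv f^{\circ-1}$, which itself came from $(f\dashv f^{\circ-1})\vdash f^{\circ-1} = f\circ f^{\circ-1} = S_0$ together with $f\circ g = (f\dashv g)\vdash g$. Then I would substitute and collapse as above. One should also remark that $f^{\circ-1}\circ f = S_0$ is the definition of the compositional inverse in $G_\S$, so no extra work is needed there.

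The only point that requires a word of care — and it is the ``main obstacle,'' such as it is — is verifying that $f\dashv S_0 = f$, i.e. that $S_0$ is a right unit for the partial composition $\dashv$. This is immediate from the explicit description $f\dashv g = \arbA + \sum_{n\geq 2} f_n(\underbrace{g,\ldots,g}_{n-1},\arbA)$: setting $g = \arbA$ replaces the first $n-1$ arguments by the operadic unit, and since operadic composition with the unit in every slot returns $f_n$ unchanged (the last slot already being $\arbA$), we recover $f$. Everything else is a formal manipulation of relations already in hand, so the proposed proof is short and self-contained.

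\begin{proof}
By the right dipterous relation, with $g = f^{\circ-1}$ and $h = f$,
\begin{equation}
{\mathcal R}(f)\dashv f = (f\dashv f^{\circ-1})\dashv f = f\dashv(f^{\circ-1}\circ f) = f\dashv S_0 = f,
\end{equation}
where the last equality holds because, from the explicit formula for $\dashv$, plugging $g = \arbA = S_0$ into $f\dashv g = \arbA + \sum_{n\geq 2} f_n(g,\ldots,g,\arbA)$ gives back $f$. Since $k = {\mathcal R}(f)$, this is the claim.
\end{proof}
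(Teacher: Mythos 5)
Your proof is correct and is essentially identical to the paper's own argument: the authors also obtain the proposition by applying the right dipterous relation with $g=f^{\circ-1}$ and $h=f$ to ${\mathcal R}(f)=f\dashv f^{\circ-1}$, collapsing $(f\dashv f^{\circ-1})\dashv f=f\dashv S_0=f$. Your extra check that $S_0$ is a right unit for $\dashv$ is a point the paper uses without comment, and it is verified correctly.
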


\subsection{Inverse ${\mathcal R}$-transform of the series of corollas}

Let $h$ be defined by 
\begin{equation}
h = f_c\dashv h.
\end{equation} 

\begin{theorem}
Let us say that a Schr\"oder tree is \emph{left directed} is the rightmost subtree of each internal node is a leaf.
Let {\rm LDST} denote the set of such trees.
Then,
\begin{equation}
h = \sum_{t\in {\rm LDST}}S^t.
\end{equation}
\end{theorem}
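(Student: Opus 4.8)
The plan is to recognize $h=f_c\dashv h$ as a weight-graded fixed-point equation admitting a unique solution, and then to check that the proposed sum over left directed Schr\"oder trees solves it. First I would make the operation explicit. Writing the series of corollas as $f_c=\arbA+\sum_{m\ge 2}c_m$ with $c_m=\wedge(\arbA,\dots,\arbA)$ ($m$ leaves), the definition of $\dashv$ gives
\[
  f_c\dashv h \;=\; \arbA+\sum_{m\ge 2}\wedge\Big(\underbrace{h,\dots,h}_{m-1},\,\arbA\Big).
\]
The key point is that this assignment is \emph{weight-reducing in the copies of $h$}: a tree of weight $n$ occurring in $\wedge(h,\dots,h,\arbA)$ comes from a corolla $c_m$ of weight $m-1\ge 1$ by grafting subtrees of $h$ of total weight $n-(m-1)$, so each grafted subtree has weight at most $n-1$. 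Hence the weight-$n$ homogeneous component of $f_c\dashv h$ depends only on the components of $h$ of weight $\le n-1$, the constant term of $f_c\dashv h$ is always $\arbA$, and therefore $h=f_c\dashv h$ has a unique solution in $G_\SS$, built by induction on the weight. It thus suffices to prove that $H:=\sum_{t\in{\rm LDST}}S^t$ satisfies $H=f_c\dashv H$.

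Next I would set up the recursive structure of ${\rm LDST}$. A tree $t\in{\rm LDST}$ is either the single leaf $\arbA$ (which lies in ${\rm LDST}$ vacuously, having no internal node), or it has a root of arity $m\ge 2$; in the latter case the root is an internal node, so by definition of ${\rm LDST}$ its rightmost subtree is a leaf, i.e.\ $t=\wedge(t_1,\dots,t_{m-1},\arbA)$, and each $t_i$ again belongs to ${\rm LDST}$ because the defining condition on internal nodes is hereditary. Conversely, for any $m\ge 2$ and $t_1,\dots,t_{m-1}\in{\rm LDST}$ the tree $\wedge(t_1,\dots,t_{m-1},\arbA)$ lies in ${\rm LDST}$, and it determines $m$ and the $t_i$ uniquely. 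This gives a bijection between ${\rm LDST}\setminus\{\arbA\}$ and $\bigsqcup_{m\ge 2}{\rm LDST}^{\,m-1}$.

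Finally I would combine the two ingredients. Expanding $H$ inside $f_c\dashv(-)$ and using multilinearity of operadic composition together with $\wedge(S^{t_1},\dots,S^{t_{m-1}},S^{\arbA})=S^{\wedge(t_1,\dots,t_{m-1},\arbA)}$,
\[
  f_c\dashv H \;=\; S^{\arbA}+\sum_{m\ge 2}\ \sum_{t_1,\dots,t_{m-1}\in{\rm LDST}} S^{\wedge(t_1,\dots,t_{m-1},\arbA)} \;=\; \sum_{t\in{\rm LDST}}S^t \;=\; H,
\]
the middle equality being exactly the bijection of the previous paragraph. By uniqueness of the fixed point, $h=H$, which is the claim.

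The step I expect to need the most care is the dictionary between the analytic and the combinatorial sides: one must verify that the clause ``graft into the first $m-1$ leaves and leave the last leaf as $\arbA$'' in the definition of $\dashv$ corresponds precisely to the defining condition of ${\rm LDST}$ (rightmost subtree of \emph{every} internal node is a leaf), handling both the base case $t=\arbA$ and the degenerate cases where some $t_i$ is itself $\arbA$. Once this is pinned down, the weight-reduction property forces uniqueness and the computation above finishes the proof at once.
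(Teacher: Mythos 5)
Your proof is correct, and it is genuinely more self-contained than the paper's. The paper's own argument is a one-line reduction: it rewrites $h$ as $\left(f_c^{\dashv-1}\right)^{\circ-1}$, using the relations between $\dashv$, $\vdash$ and $\circ$ established for the operadic ${\mathcal R}$-transform, and then simply refers to \cite[Section 10.2]{MNT}, where that compositional inverse was already computed and identified with the sum over left directed trees. You instead prove the statement from scratch: you note that each corolla contributing to $f_c$ has weight at least $1$, so by additivity of weight under operadic composition the weight-$n$ component of $f_c\dashv h$ involves only components of $h$ of weight at most $n-1$; hence the fixed-point equation $h=f_c\dashv h$ has a unique solution in $G_\SS$. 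You then check that $\sum_{t\in\mathrm{LDST}}S^t$ is a solution via the root decomposition $t=\wedge(t_1,\dots,t_{m-1},\arbA)$ with each $t_i\in\mathrm{LDST}$, which is indeed a bijection between $\mathrm{LDST}$ minus the single leaf and $\bigsqcup_{m\ge2}\mathrm{LDST}^{m-1}$ (the heredity of the defining condition and the uniqueness of the root arity and subtrees are exactly the points to check, and you check them). This matches term for term the explicit form $f_c\dashv H=\arbA+\sum_{m\ge2}\wedge(H,\dots,H,\arbA)$. What your route buys is independence from the external reference and an elementary, verifiable argument; what the paper's route buys is brevity and the conceptual link to the inverse ${\mathcal R}$-transform.
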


\Proof By definition,
\begin{equation}
h = (f_c^{\dashv-1})^{\circ -1},
\end{equation}
which has been computed in \cite[Section 10.2]{MNT}

The connection between the operadic ${\mathcal R}$-transform and its classical
version will be made precise in Section \ref{sec:spei}.

\subsection{A dendriform structure arising from the Schr\"oder group}

%
%
%

Equation 
\eqref{eq:kappa}
seems to have the same structure as that of \cite{EFP1}, 
which involves dendriform (or shuffle) and codendriform (or unshuffle) algebras.

In terms of the Hopf algebra structure, instead of the convolution of characters, which on trees $T$ (of positive weight) reads as
\begin{equation}
(f \ast g)(T)=\pi \circ (f\otimes g) \circ \Delta (T)=\sum_{c=\lbrace i_1,\dots, i_k \rbrace} f(R^c(T))g(P^c(T))
\end{equation}
we get
\begin{equation}\label{eq:dash}
(f \dashv g)(T)=\pi \circ (f\otimes g) \circ \Delta^+_{\prec} (T)=\sum_{c=\lbrace i_1,\dots, i_k \rbrace}^{\prec} f(R^c(T))g(P^c(T))
\end{equation}
where the sum is restricted to admissible cuts such that the rightmost leaf of $T$ remains in $R^c(T)$ or, 
equivalently, such that the rightmost subtree in $P^c(T)$ does not contain the rightmost leaf of $T$.

Extending  to Schr\"oder trees the constructions of \cite{EFP1,EFP}, we can now state:
\begin{theorem} \label{th:unshundec}
For any tree $T$ of positive weight, let 
\begin{equation}
\Delta^+_{\prec} (T)= \sum_{c=\lbrace i_1,\dots, i_k \rbrace}^{\prec} R^c(T)\otimes P^c(T) \textrm{ and } \Delta^+_{\succ} (T)=\Delta(T)-\Delta^+_{\prec} (T)
\end{equation}
These maps can be extended to the augmentation ideal $H^+_\SS$ of $H_\SS$ by the rule:
\begin{eqnarray}
\Delta^+_{\prec} (T_1 T_2\dots T_s) &=& \Delta^+_{\prec} (T_1).\Delta(T_2\dots T_s) \\
\Delta^+_{\succ} (T_1 T_2\dots T_s) &=& \Delta^+_{\succ} (T_1).\Delta(T_2\dots T_s) 
\end{eqnarray}
so that $H_\SS$ is a codendriform bialgebra.
\end{theorem}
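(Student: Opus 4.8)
The plan is to verify directly that the maps $\Delta^+_{\prec}$ and $\Delta^+_{\succ}$ satisfy the axioms of a codendriform coalgebra (the de-concatenation dual of the dendriform axioms), and then that these co-operations are compatible with the product of $H_\SS$, which is what "codendriform bialgebra" means. Since $H_\SS$ is the free commutative (resp. noncommutative, as the trees are ordered) polynomial algebra on the set $\PT$ of reduced plane trees, and the extension rule to products is dictated by the statement, the whole verification reduces by induction on the number of tree-factors $s$ to the case $s=1$, i.e. to checking the coassociativity-type identities on a single tree $T$.

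First I would record the three codendriform axioms to be checked on $H^+_\SS$: writing $\bar\Delta=\Delta^+_{\prec}+\Delta^+_{\succ}$ for the reduced coproduct, one needs
\begin{align}
(\Delta^+_{\prec}\otimes\mathrm{id})\circ\Delta^+_{\prec} &= (\mathrm{id}\otimes\bar\Delta)\circ\Delta^+_{\prec},\\
(\Delta^+_{\succ}\otimes\mathrm{id})\circ\Delta^+_{\prec} &= (\mathrm{id}\otimes\Delta^+_{\prec})\circ\Delta^+_{\succ},\\
(\bar\Delta\otimes\mathrm{id})\circ\Delta^+_{\succ} &= (\mathrm{id}\otimes\Delta^+_{\succ})\circ\Delta^+_{\succ},
\end{align}
whose sum is exactly the coassociativity of $\bar\Delta$, already known since $H_\SS$ is a Hopf algebra. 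On a single tree $T$ these become statements about pairs of nested admissible cuts, tracked by whether the rightmost leaf of $T$ stays in the trunk. The key combinatorial fact is that performing an admissible cut $c$ of $T$ and then an admissible cut $c'$ of $R^c(T)$ is the same datum as a pair $(c',\, c'')$ where $c'$ is an admissible cut of $T$ and $c''$ is an admissible cut of $P^{c'}(T)$ — this is the standard "cutting of cuts" bijection underlying coassociativity. Under this bijection the condition "rightmost leaf of $T$ in $R^c(T)$" on the left and the corresponding $\prec/\succ$ conditions on the right match up by a direct case analysis on which branch of $T$ carries the rightmost leaf: the rightmost leaf lies in $P^{c'}(T)$ iff $c'$ contains an internal vertex on the rightmost root-to-leaf path, and then it lies precisely in the rightmost subtree $T_{i_k}$ of $P^{c'}(T)$, so "rightmost leaf of $P^{c'}(T)$ not in its rightmost subtree" never holds, which is exactly what kills the cross terms in axioms two and three.

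Then I would push the identities from single trees to monomials. Given the extension rules $\Delta^+_{\star}(T_1T_2\cdots T_s)=\Delta^+_{\star}(T_1)\cdot\Delta(T_2\cdots T_s)$ for $\star\in\{\prec,\succ\}$ and the fact that $\Delta$ is an algebra morphism, each of the three axioms on $T_1\cdots T_s$ factors as (the axiom for $\Delta^+_{\star}$ on $T_1$) tensored with (coassociativity of $\Delta$ on $T_2\cdots T_s$), so the $s=1$ case and the already-established coassociativity of $\Delta$ give the general case; this also immediately yields the bialgebra compatibility, since $\Delta^+_{\prec}$ and $\Delta^+_{\succ}$ are by construction left-module maps over $(H_\SS,\Delta)$ acting on the right, which is precisely the codendriform-bialgebra compatibility condition. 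I expect the main obstacle to be bookkeeping in the single-tree case: one must be careful that the rightmost-leaf condition interacts correctly with the recursive structure when the cut $c$ passes through vertices lying on the rightmost path versus strictly to its left, and that the definition $\Delta^+_{\succ}=\Delta-\Delta^+_{\prec}$ (rather than an independent positivity-type definition) is used consistently so that the two "mixed" axioms reduce, after subtracting the coassociativity of $\bar\Delta$, to the single axiom for $\Delta^+_{\prec}$. Once that case analysis is laid out cleanly the rest is the routine induction sketched above.
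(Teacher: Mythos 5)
Your proposal is correct and follows essentially the same route as the paper: both arguments rest on the coassociativity of the reduced coproduct together with a trichotomy of the terms of the iterated coproduct according to which of the three tensor factors receives the rightmost leaf, the three codendriform identities being exactly the three cases of that trichotomy. The only organizational difference is that the paper states the rightmost-leaf characterization of $\Delta_\prec$ and $\Delta_\succ$ directly on forests and never reduces to $s=1$ (a reduction that cannot be complete anyway, since the second application of the half-coproducts always lands on the forest $P^c(T)$), but this does not change the substance of the argument.
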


The last statement means that on $H^+_\SS$, if
\begin{eqnarray}
\Delta(a) &=&\bar{\Delta}(a)+1\otimes a  +a \otimes 1 \\
\Delta^+_{\prec}(a)  &=& \Delta_{\prec} (a) +a \otimes 1 \\
\Delta^+_{\succ} (a) &=& \Delta_{\succ} (a) + 1 \otimes a 
\end{eqnarray}
then
\begin{eqnarray}
(\Delta_{\prec}\otimes I)\circ \Delta_{\prec} &=& (I\otimes \bar{\Delta})\circ \Delta_{\prec} \\
(\Delta_{\succ}\otimes I)\circ \Delta_{\prec} &=& (I\otimes \Delta_{\prec})\circ \Delta_{\succ} \\
(\bar{\Delta} \otimes I)\circ \Delta_{\succ} &=& (I\otimes \Delta_{\succ})\circ \Delta_{\succ}
\end{eqnarray}

\begin{proof}
It is easy to see that, for a forest $F=T_1 \dots T_s$, $\Delta_{\prec}(F)$ is the sum of terms $R^c(F) \otimes P^c(T)$ 
over the non trivial ($c\not=\emptyset$) admissible cuts of $F$,
 such that the rightmost leaf of $T_1$ is in $R^c(T)$. 
In the same way, $\Delta_{\succ}(F)$ is the sum of terms $R^c(F) \otimes P^c(T)$ over the non trivial ($P^c(T)\not=1$) 
admissible cuts of $F$ such that the rightmost leaf of $T_1$ is in $P^c(T)$.

The reduced coproduct being coassociative,  
\begin{equation}
(\bar{\Delta} \otimes I) \circ \bar{\Delta} = (I\otimes \bar{\Delta})\circ \bar{\Delta}
\end{equation}
when applied to a forest $F=T_1 \dots T_s$, this yields a sum of terms $F^{(1)}\otimes F^{(2)}\otimes F^{(3)}$ 
obtained after two successive  nontrivial admissible cuts ($F^{(i)}\in H^+_\SS$). 

As $\bar{\Delta}=\Delta_{\prec}+\Delta_{\succ}$ on $H^+_\SS$,
\begin{equation}
(\bar{\Delta} \otimes I) \circ \bar{\Delta}=(\Delta_{\prec}\otimes I)\circ \Delta_{\prec}+(\Delta_{\succ}\otimes I)\circ \Delta_{\prec}+(\bar{\Delta} \otimes I)\circ \Delta_{\succ},
\end{equation}
for  $F=T_1 \dots T_s$, the sum of terms $F^{(1)}\otimes F^{(2)}\otimes F^{(3)}$ of $(\bar{\Delta} \otimes I) \circ \bar{\Delta}(F)$ splits into three parts:
\begin{itemize}
\item Terms $F^{(1)}\otimes F^{(2)}\otimes F^{(3)}$ contributing to $(\Delta_{\prec}\otimes I)\circ \Delta_{\prec}(F)$, which  are those such that the rightmost leaf of $T_1$ is in $F^{(1)}$.
\item Terms $F^{(1)}\otimes F^{(2)}\otimes F^{(3)}$ contributing to $(\Delta_{\succ}\otimes I)\circ \Delta_{\prec}(F)$, which are those such that the rightmost leaf of $T_1$ is in $F^{(2)}$.
\item Terms $F^{(1)}\otimes F^{(2)}\otimes F^{(3)}$ contributing to $(\bar{\Delta} \otimes I)\circ \Delta_{\succ}(F)$, which are those such that the rightmost leaf of $T_1$ is in $F^{(3)}$.
\end{itemize}
But this sum is also equal to $(I\otimes \bar{\Delta})\circ \bar{\Delta}(F)$, and the decomposition
\begin{equation}
(I\otimes \bar{\Delta})\circ \bar{\Delta}=(I\otimes \bar{\Delta})\circ \Delta_{\prec}+ (I\otimes \Delta_{\prec})\circ \Delta_{\succ}+ (I\otimes \Delta_{\succ})\circ \Delta_{\succ}
\end{equation}
must coincide with the previous one. This proves the relations 
\begin{eqnarray*}
(\Delta_{\prec}\otimes I)\circ \Delta_{\prec} &=& (I\otimes \bar{\Delta})\circ \Delta_{\prec} \\
(\Delta_{\succ}\otimes I)\circ \Delta_{\prec} &=& (I\otimes \Delta_{\prec})\circ \Delta_{\succ} \\
(\bar{\Delta} \otimes I)\circ \Delta_{\succ} &=& (I\otimes \Delta_{\succ})\circ \Delta_{\succ}
\end{eqnarray*}
\end{proof}

As in \cite{EFP1}, the space ${\rm Lin}(H_\SS, k)$, which  is a $\K$-algebra for the convolution product 
\begin{equation}
(f*g)=\pi \circ (f\otimes g) \circ \Delta 
\end{equation}
is also a dendrifrom algebra for left and right half-convolutions
\begin{eqnarray}
(f\prec g)&=&\pi \circ (f\otimes g) \circ \Delta_{\prec} \\
(f\succ g)&=&\pi \circ (f\otimes g) \circ \Delta_{\succ}. 
\end{eqnarray}
In terms of characters in $G_\SS$, the operation $\dashv$ (Equation (\ref{eq:dash})) coincides on trees with 
$\prec$, and the relation between $f_c$ and $\kappa$ in Theorem \ref{th:kap} translates now into the character equation
\begin{equation}
f_c=\epsilon +(\Res(\kappa))\prec f_c
\end{equation}
where $\epsilon$ is the unit of the group, corresponding to $S_0$.

As we shall see, this equation implies \cite[Th. 13]{EFP}, and thus also
Speicher's formula for the free cumulants. We shall first explain in the forthcoming
section how to derive the latter from  \eqref{eq:PST} by a direct combinatorial argument.

\section{Speicher's formula}\label{sec:spei}

Equation \eqref{eq:PST} is a formula for the free cumulants in terms of moments, involving
prime Schr\"oder trees instead of noncrossing partitions as in 
Equation~\eqref{kappaphi} (which was originally the definition of free 
cumulants). 

To recover a noncrossing partition from a tree, label the sectors
from left to right so as to obtain
the identity permutation by flattening the tree:
 

\[
   \tikzset{every picture/.style={scale=0.6}}
  \begin{tikzpicture}
      \tikzstyle{ver} = [circle, draw, fill, inner sep=0.5mm]
      \tikzstyle{edg} = [line width=0.6mm]
      \node      at (1.5,-0.6) {1};
      \node      at (2.5,-0.6) {2};
      \node      at (3.5,-0.6) {3};
      \node      at (4.5,-0.6) {4};
      \node      at (5.5,-0.6) {5};
      \node      at (6.5,-0.6) {6};
      \draw[edg] (2,0) to (2.5,0.5);
      \draw[edg] (3,0) to (2.5,0.5);
      \draw[edg] (2.5,0.5) to (1.8,1);
      \draw[edg] (1,0) to (1.8,1);
      \draw[edg] (4,0) to (5,1);
      \draw[edg] (5,0) to (5,1);
      \draw[edg] (6,0) to (5,1);
      \draw[edg] (1.8,1) to (4.5,2);
      \draw[edg] (5,1) to (4.5,2);
      \draw[edg] (7,0) to (4.5,2);
  \end{tikzpicture}
\]

The blocks of the partition are then formed by the adjacent sectors.
Here we obtain $1|2|36|45$.

But one can read more information. If we consider that $\phi$ is an arbitrary endomorphism of an associative
 algebra $A$ (instead of a linear form), then one reads the expression (compare \cite[Ex. 2.1.2]{Spei2})
 \begin{equation}
 \phi(\phi(a_1\phi(a_2))a_3\phi(a_4a_5)a_6).
 \end{equation}
Denote by $\kappa_n[a_1,\ldots,a_n]$ the evaluation of $\kappa_n$ obtained by this process.
For example (compare Eq. \eqref{ex:kappa}),
\begin{equation}
\begin{split}
\kappa_3[a_1a_2a_3]=&\phi(a_1a_2a_3)-\phi(\phi(a_1)a_2a_3)-\phi(a_1\phi(a_2)a_3) - \phi(\phi(a_1a_2)a_3)\\
&+  \phi(\phi(\phi(a_1)a_2)a_3)+  \phi(\phi(a_1\phi(a_2))a_3).  
\end{split}
\end{equation}

If we assume, as in the case of operator-valued free probability, that $A$ is a $B$-algebra
(for some unitary associative algebra $B$) and that $\phi$ is a bimodule map, this reduces to
\begin{equation}\label{ex:ncp}
\begin{split}
\kappa_3[a_1a_2a_3]=&\phi(a_1a_2a_3)-\phi(a_1)\phi(a_2a_3)-\phi(a_1\phi(a_2)a_3) - \phi(a_1a_2)\phi(a_3)\\
&+2  \phi(a_1a_2a_3)
\end{split}  
\end{equation}

If moreover $\phi$ is scalar valued, we can rewrite this evaluation as
\begin{equation} \label{eq:mixedcumulantsdef}
\kappa_n[a_1,\dots,a_n] = 
\sum_{t\in \PST_n} (-1)^{i(t)-1} \prod_{v\in
\ii(T)} \phi\left( \prod_{v \measuredangle i}  a_i  \right)
\end{equation}
where in the latter product, $v \measuredangle i$ means that the internal vertex
$v$ has a clear view to the $i$th sector between the $i$th and $(i+1)th$ leaves.
For example, the above tree 
gives the term $\phi(a_1)\phi(a_2)\phi(a_3a_6)\phi(a_4a_5)$.

Let us show that this expression is equivalent to Speicher's formula, so that\\
 $\kappa_n[a_1,\ldots,a_n]$
is indeed the value of the free cumulant.
For this, we need a natural map from 
prime Schr\"oder trees to noncrossing partitions.
Such a map can be defined in terms for noncrossing arrangements of binary trees:

\begin{definition}
 A noncrossing arrangement of binary trees is a set of binary trees, whose leaves are labeled 
 with integers from $1$ to $n$, in such a way that the canonical drawing of the trees does not create any crossing.
 Let $\mathcal{A}_n$ denote the set of such objects.
\end{definition}

\begin{proposition} \label{bijpsttrees}
 There is a bijection between ${\rm PST}_n$ and $\mathcal{A}_n$, which can be defined as follows.
 Let $t \in {\rm PST}_n$, then its image is obtained by:
 \begin{itemize}
  \item removing each middle edge (i.e. an edge which is not leftmost or rightmost among all edges below some 
        internal vertex)
  \item removing the root and all edges below.
  \end{itemize}
\end{proposition}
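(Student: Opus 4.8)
The plan is to exhibit the claimed map explicitly, verify it lands in $\mathcal{A}_n$, and then construct an inverse. First I would set up the correspondence between the sectors of a prime Schr\"oder tree $t\in\PST_n$ and the labels $1,\ldots,n$: a tree of weight $n$ has $n+1$ leaves, hence $n$ gaps between consecutive leaves, and these are the sectors, read left to right. An internal vertex $v$ of arity $k$ has $k$ edges below it; the middle edges (all but the leftmost and rightmost) are precisely the ones to be deleted in the first bullet of the proposition, and the root together with everything below it is deleted in the second bullet. What survives is a collection of subtrees, each one a binary tree (since at every remaining internal vertex only the leftmost and rightmost of its original edges are kept, so it now has exactly two descendants), and the leaves of these surviving binary trees should be labeled by the sectors that they ``enclose''. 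The first task is to make this labeling precise: each leaf of a surviving binary tree corresponds to a maximal consecutive run of sectors of $t$ that sit below that leaf after the deletions, and one checks that these runs partition $\{1,\ldots,n\}$ into consecutive blocks, which is exactly the data of a forest of binary trees with consecutive integer labels.

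Next I would check the noncrossing condition. After the deletions, the planar structure of $t$ is inherited by the surviving forest: the binary trees are drawn in the half-plane in the same cyclic (here linear) order their roots had as children of deleted vertices, and within each tree the left-right order of leaves is preserved. A crossing in a planar arrangement of trees on a line can only arise if the leaf-label intervals of two trees (or two subtrees) interleave, i.e. are of the form $a<b<c<d$ with one tree spanning $\{a,c\}$-ish positions and the other $\{b,d\}$-ish. But because $t$ is a plane tree, the sets of sectors below any two incomparable vertices of $t$ are either nested or consecutive-disjoint, never interleaved; deleting edges only coarsens the tree and cannot destroy this, so no crossing appears. Hence the image genuinely lies in $\mathcal{A}_n$. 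Here the primeness of $t$ (rightmost subtree of the root is a leaf) should be used to see that the construction is weight-compatible and that the resulting arrangement has exactly $n$ leaves total, matching $\mathcal A_n$; I would spell out that the rightmost leaf of $t$ is a sector-less leaf that gets absorbed correctly.

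The inverse map is the main content. Given a noncrossing arrangement of binary trees on $\{1,\ldots,n\}$, I want to reconstruct a unique prime Schr\"oder tree. The noncrossing condition organizes the trees (and all their subtrees) into a forest under the ``spanning interval'' containment order; I would build a single Schr\"oder tree by adding, at each nesting level, a new internal vertex whose children are, in left-to-right order, the maximal trees/subtrees immediately contained in the current region, with extra leaves inserted to account for sectors not covered by any deeper tree. Adding a root on top whose rightmost child is a fresh leaf makes it prime. One then checks, by induction on $n$ (or on the number of internal vertices), that this is a two-sided inverse of the deletion map: deleting middle edges and the root from the reconstructed tree recovers the arrangement, and reconstructing from the image of $t$ recovers $t$. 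The bookkeeping of sectors-versus-leaves and the insertion of the auxiliary leaves is where care is needed.

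The main obstacle I anticipate is precisely this sector/leaf accounting in the inverse direction: making sure that the ``uncovered'' sectors between consecutive subtrees at a given level are turned into the right number of leaves of $t$ in the right positions, so that the weight matches ($|t|=n$) and primeness is automatic, and that the map is well-defined (independent of how one traverses the nesting forest). I would handle it by a clean recursive definition keyed on intervals of $\{1,\ldots,n\}$ together with an induction showing both compositions are the identity, rather than by a direct global description.
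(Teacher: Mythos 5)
The paper never actually proves this proposition: it is stated as an observation, illustrated by Figure~\ref{arrtrees} and the Note that follows, and only the sign identity \eqref{signinternalvertices} is used downstream. So your plan --- describe the map precisely, check it lands in $\mathcal{A}_n$, build an explicit inverse, and verify both composites --- is exactly what a complete proof requires, and the planarity argument for the noncrossing condition (the arrangement is a sub-drawing of the plane drawing of $t$, so nested-or-disjoint intervals are preserved) is fine.

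Two points need repair. First, the leaf labelling is simpler than your ``maximal consecutive run of sectors'': deleting the middle edges and the root leaves exactly the non-root vertices of $t$, so the leaves of the arrangement are precisely the first $n$ leaves of $t$ (the $(n+1)$st, being a child of the root by primeness, becomes isolated and is discarded), and leaf $i$ carries the single label $i$, the sector on its right. There are no nontrivial runs; if the blocks were not singletons the image would have fewer than $n$ labelled leaves and would not lie in $\mathcal{A}_n$ as defined. Second, and more substantively, the inverse as literally described (``adding, at each nesting level, a new internal vertex whose children are the maximal trees/subtrees immediately contained in the current region'') builds a tree from interval data alone and risks discarding the internal structure of the binary trees, which is exactly the information the bijection must preserve. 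The only genuinely new vertex is the root of $t$; every other internal vertex of $t$ must be an internal vertex of some tree of $A$, and a component $C$ nested inside a component $D$ is reattached as a \emph{middle} child of the unique internal vertex $v$ of $D$ whose gap --- between the rightmost leaf of $v$'s left subtree and the leftmost leaf of $v$'s right subtree --- contains the label interval of $C$. Uniqueness uses the standard bijection between internal vertices of a binary tree and gaps between its consecutive leaves, and one must check (via the noncrossing condition) that $C$'s interval cannot straddle a leaf of $D$. Your ``containment forest of all subtrees'' can be made to coincide with this, but only after observing that the vertex created for the region of a subtree is that subtree's own root; without that identification the induction showing the two maps are mutually inverse cannot be carried out.
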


See Figure~\ref{arrtrees} for an example.

\begin{figure}[h!tp]
 \tikzset{every picture/.style={scale=0.4}}
 \begin{tikzpicture}
   \tikzstyle{ver} = [circle, draw, fill, inner sep=0.5mm]
   \tikzstyle{edg} = [line width=0.6mm]
   \node[ver] at (1,0) {};
   \node[ver] at (2,0) {};
   \node[ver] at (3,0) {};
   \node[ver] at (4,0) {};
   \node[ver] at (5,0) {};
   \node[ver] at (6,0) {};
   \node[ver] at (7,0) {};
   \node[ver] at (8,0) {};
   \node[ver] at (9,0) {};
   \node[ver] at (10,0) {};
   \node[ver] at (11,0) {};
   \node      at (1.5,-1) {1};
   \node      at (2.5,-1) {2};
   \node      at (3.5,-1) {3};
   \node      at (4.5,-1) {4};
   \node      at (5.5,-1) {5};
   \node      at (6.5,-1) {6};
   \node      at (7.5,-1) {7};
   \node      at (8.5,-1) {8};
   \node      at (9.5,-1) {9};
   \node      at (10.5,-1) {10};
   \draw[edg] (1,0) to (3.5,2.5);
   \draw[edg] (6,0) to (3.5,2.5);   
   \draw[edg] (3,2) to (5,0);   
   \draw[edg] (4.5,0.5) to (4,0);   
   \draw[edg] (2,0) to (2.5,0.5);   
   \draw[edg] (3,0) to (2.5,0.5);      
   \draw[edg] (7,0) to (8.5,1.5);      
   \draw[edg] (10,0) to (8.5,1.5);
   \draw[edg] (9,1) to (8,0);
   \draw[edg] (2.5,0.5) to (3,2);   
   \draw[edg] (9,0) to (9,1);   
   \draw[edg] (7,3.5) to (3.5,2.5);   
   \draw[edg] (7,3.5) to (8.5,1.5);   
   \draw[edg] (7,3.5) to (11,0);   
 \end{tikzpicture}
 \hspace{2.5cm}
 \begin{tikzpicture}
   \tikzstyle{ver} = [circle, draw, fill, inner sep=0.5mm]
   \tikzstyle{edg} = [line width=0.6mm]
   \node[ver] at (1,0) {};
   \node[ver] at (2,0) {};
   \node[ver] at (3,0) {};
   \node[ver] at (4,0) {};
   \node[ver] at (5,0) {};
   \node[ver] at (6,0) {};
   \node[ver] at (7,0) {};
   \node[ver] at (8,0) {};
   \node[ver] at (9,0) {};
   \node[ver] at (10,0) {};
   \node      at (1,-1) {1};
   \node      at (2,-1) {2};
   \node      at (3,-1) {3};
   \node      at (4,-1) {4};
   \node      at (5,-1) {5};
   \node      at (6,-1) {6};
   \node      at (7,-1) {7};
   \node      at (8,-1) {8};
   \node      at (9,-1) {9};
   \node      at (10,-1) {10};
   \draw[edg] (1,0) to (3.5,2.5);
   \draw[edg] (6,0) to (3.5,2.5);   
   \draw[edg] (3,2) to (5,0);   
   \draw[edg] (4.5,0.5) to (4,0);   
   \draw[edg] (2,0) to (2.5,0.5);   
   \draw[edg] (3,0) to (2.5,0.5);      
   \draw[edg] (7,0) to (8.5,1.5);      
   \draw[edg] (10,0) to (8.5,1.5);
   \draw[edg] (9,1) to (8,0);
 \end{tikzpicture}
 \caption{The bijection from prime Schr\"oder trees to noncrossing arrangements of binary trees. \label{arrtrees}}
\end{figure}
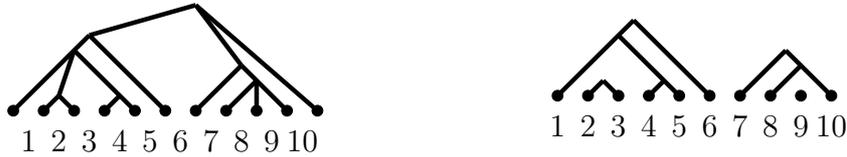

\begin{note}{\rm
Note that in a prime Schr\"oder tree, we label the sectors, whereas in the arrangement of trees, 
we label the leaves. This  means that through the bijection, we need to shift each label in a sector 
to the leaf to its left.
}
\end{note}

To avoid unnecessary notation, we just write $t\mapsto A$ if
the bijection defined in the previous proposition sends $t\in {\rm PST}_n$ to $A\in\mathcal{A}_n$.
Note that there is one less internal vertex in $A$ than
in $t$. Since 
the number of internal vertices of a binary tree is its number of leaves minus one,
\begin{equation} \label{signinternalvertices}
  (-1)^{i(t)-1}  =  (-1)^{ \sum_{a\in A} i(a) } = (-1)^{ n- \# A },
\end{equation}
where $\# A$ denotes the cardinality of $A$.

Also, $A\in\mathcal{A}_n$ can be sent to a noncrossing partition
$\pi$ as follows, which we also denote $A\mapsto \pi$.
It is defined by the condition that two integers are in the same block of $\pi$ if they are in the 
same tree in $A$. For example, the noncrossing arrangement in Figure~\ref{arrtrees} gives the 
noncrossing partition $1456|23|78{\rm A}|9$.
The map $A\mapsto \pi$ is not a bijection, but for $\pi\in {\rm NC}_n$, we have:
\begin{equation}  \label{atopi}
   \#\{ A\in\mathcal{A}_n \, : \, A\mapsto \pi  \} = \prod_{B\in \pi} C_{\# B -1 },
\end{equation}
where $C_n$ denote the Catalan numbers.
Indeed, the construction of  $A$ amounts to choosing a binary tree with $\# B$ leaves for each block $B\in\pi$,
whence the product of Catalan numbers.

\begin{definition}
 The {\it Kreweras complement} of a noncrossing partition $\pi \in {\rm NC}_n$ is
 the noncrossing partition $\pi^c$ defined by the following process:
 \begin{itemize}
  \item we draw $2n$ dots representing integers and primed integers $1,1',2,2',\dots,n,n'$ in this order,
  \item $\pi$ is drawn as a noncrossing partition on $1,\dots,n$ in the usual way,
  \item $\pi^c$ is the coarsest noncrossing partition on $1',\dots,n'$ that can be drawn without crossing
        $\pi$.
 \end{itemize}
Then, $\pi^c$ is identified with an element of ${\rm NC}_n$ by removing the prime symbols.
\end{definition}

For example, Figure~\ref{kreweras} shows that if $\pi= 134|2|57|6|8$, then $\pi^c = 12|3|478|56$.
Practically, there is a convenient equivalent definition. If we have a set of noncrossing paths above the horizontal axis
linking pairs of integers (as in Figure~\ref{kreweras}), it defines a noncrossing partition $\pi$ by taking 
connected components, and $i,j$ are in a same block of $\pi^c$ iff there exists a path from $i$ to $j$ that
does not cross the other paths.

\tikzset{every picture/.style={scale=0.6}}
\begin{figure}[h!tp]
 \begin{tikzpicture}
   \tikzstyle{edg} = [line width=0.6mm]
   \tikzstyle{edh} = [line width=0.6mm,dashed]
   \tikzstyle{ver} = [circle, draw, fill, inner sep=0.5mm]
   \node[ver] at (1,0) {};
   \node[ver] at (2,0) {};
   \node[ver] at (3,0) {};
   \node[ver] at (4,0) {};
   \node[ver] at (5,0) {};
   \node[ver] at (6,0) {};
   \node[ver] at (7,0) {};
   \node[ver] at (8,0) {};
   \node[ver] at (9,0) {};
   \node[ver] at (10,0) {};
   \node[ver] at (11,0) {};
   \node[ver] at (12,0) {};
   \node[ver] at (13,0) {};
   \node[ver] at (14,0) {};
   \node[ver] at (15,0) {};
   \node[ver] at (16,0) {};
   \node      at (1,-1) {1};
   \node      at (2,-1) {1'};
   \node      at (3,-1) {2};
   \node      at (4,-1) {2'};
   \node      at (5,-1) {3};
   \node      at (6,-1) {3'};
   \node      at (7,-1) {4};
   \node      at (8,-1) {4'};
   \node      at (9,-1) {5};
   \node      at (10,-1) {5'};
   \node      at (11,-1) {6};
   \node      at (12,-1) {6'};
   \node      at (13,-1) {7};
   \node      at (14,-1) {7'};
   \node      at (15,-1) {8};
   \node      at (16,-1) {8'};
   \draw[edg] (1,0) to[bend left=60] (5,0);
   \draw[edg] (5,0) to[bend left=60] (7,0);
   \draw[edg] (9,0) to[bend left=60] (13,0);
   \draw[edh] (2,0) to[bend left=60] (4,0);
   \draw[edh] (8,0) to[bend left=60] (14,0);
   \draw[edh] (10,0) to[bend left=60] (12,0);
   \draw[edh] (14,0) to[bend left=60] (16,0);
 \end{tikzpicture}
 \caption{The Kreweras complement of a noncrossing partition. \label{kreweras}}
\end{figure}
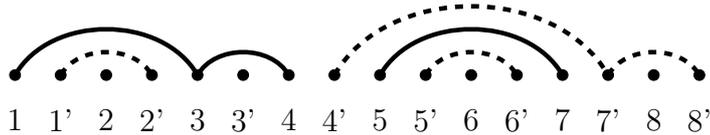

\begin{lemma} \label{treenc}
 Suppose that $t\mapsto A \mapsto \pi$ (with the previous notation). Then the noncrossing partition
 $\pi^c$ is obtained from $t$ by the following condition: $i$ and $j$ are in the same block of $\pi^c$
 if and only if some internal vertex of $t$ has a clear view to the $i$th and $j$th sectors.
\end{lemma}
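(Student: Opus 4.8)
The plan is to extract both $\pi^c$ and the clear-view relation from one and the same planar drawing of $t$. Recall first the meaning of "clear view": draw $t$ with the root on top and the leaves $\ell_1,\dots,\ell_{n+1}$ on a horizontal line, and let $s_i$ be the $i$-th sector, the open segment of that line between $\ell_i$ and $\ell_{i+1}$. An internal vertex $v$ satisfies $v\measuredangle i$ precisely when $v=\mathrm{lca}(\ell_i,\ell_{i+1})$, i.e.\ when $\ell_i$ is the rightmost leaf of some subtree $T'$ of $v$ and $\ell_{i+1}$ the leftmost leaf of the next subtree $T''$ of $v$; equivalently, $s_i$ sits at the bottom of the region of the drawing (call it the \emph{canyon} of $s_i$) bounded by the edge from $v$ to $\mathrm{root}(T')$, the right boundary of $T'$, the edge from $v$ to $\mathrm{root}(T'')$, the left boundary of $T''$, and a piece of the horizontal line. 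In particular each sector is seen by exactly one internal vertex, so the right-hand side of the asserted equivalence is the partition of $\{1,\dots,n\}$ in which $i\sim j$ iff $\mathrm{lca}(\ell_i,\ell_{i+1})=\mathrm{lca}(\ell_j,\ell_{j+1})$; the goal is to identify it with $\pi^c$.

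Next I exhibit $\pi$ and $\pi^c$ inside this picture. The bijection of Proposition~\ref{bijpsttrees} deletes from $t$ all \emph{middle} edges and the root together with the edges issuing from it; call \emph{kept} the surviving edges. By construction the kept edges \emph{are} the arrangement $A$, drawn in place (with $\ell_i$ bearing the label $i$, and $\ell_{n+1}$ --- whose only edge, to the root, is deleted --- becoming an isolated point carrying no kept edge). They therefore form a noncrossing planar realization of $\pi$, and the $n$ primed points of the Kreweras construction may be taken to be the points of the sectors $s_1,\dots,s_n$, since $i'$ lies between $\ell_i$ and $\ell_{i+1}$. By the planar description of the Kreweras complement recalled just before the lemma (applied to this realization --- one may, if one prefers, first replace each kept tree by a path running through its leaves, which changes neither $\pi$ nor the ensuing region decomposition), $i$ and $j$ lie in the same block of $\pi^c$ if and only if $s_i$ and $s_j$ lie in the same connected component of the complement, in the upper half-plane, of the kept edges.

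It remains to match these components with the canyons. Fix $s_i$ and let $v=\mathrm{lca}(\ell_i,\ell_{i+1})$, with $T',T''$ as above. Among the walls bounding the component of $s_i$, the right boundary of $T'$ is a chain of rightmost-child edges and the left boundary of $T''$ a chain of leftmost-child edges --- all of which are kept --- and the only remaining walls are the two edges of $v$ to $\mathrm{root}(T')$ and to $\mathrm{root}(T'')$. Hence one leaves this component only by crossing one of those two edges, and only if it has been deleted, that is: only across a middle edge of $v$, or --- when $v$ is the root --- across any of the edges below the root. Consequently, if $v$ is not the root, crossing middle edges of $v$ passes between consecutive canyons of $v$ but never out of their union, so the component of $s_i$ is exactly the union of the canyons of $v$, namely $\{\,s_k:\mathrm{lca}(\ell_k,\ell_{k+1})=v\,\}$; and if $v$ is the root, then, all edges below the root having been deleted, one may in addition escape past the leftmost and the rightmost of them into the single unbounded component, which thus touches the horizontal line exactly in the sectors seen by the root, and in no others. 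In every case the component of $s_i$ meets the horizontal line precisely in the sectors $s_k$ with $\mathrm{lca}(\ell_k,\ell_{k+1})=v$. Combined with the previous paragraph this identifies $\pi^c$ with the clear-view partition and proves the lemma.

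The one step that needs genuine care is the region analysis of the last paragraph: one must check that, apart from the pair of edges meeting at the lowest common ancestor, every wall of a sector's region is a kept edge, so that connectivity of the complement is governed exactly by which middle edges --- and, at the root, which root edges --- were removed. The bookkeeping forced by primeness (the extra leaf $\ell_{n+1}$ losing its only edge, so that it and the whole block of the root live in the unbounded component, which corresponds to the primed point $n'$) requires a line of attention but no real effort; identifying the kept edges with $A$ and invoking the planar form of the Kreweras complement are immediate from the definitions recalled above.
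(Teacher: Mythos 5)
Your proof is correct and follows essentially the same route as the paper's: both read $\pi^c$ off the planar drawing as connectivity in the complement of the arrangement $A$, and both rest on the fact that the deleted edges are exactly the middle (and root) edges, so that the sectors seen by a common internal vertex are precisely the ones that get merged. Your canyon-by-canyon analysis of the connected components is in fact somewhat more complete than the paper's argument, which exhibits a connecting path for one implication and treats the converse rather briefly.
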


See for example Figure~\ref{treencfig} where $\pi = 12|3|46|5$ and $\pi^c = 1|236|45 $.

\begin{proof}
We place labels $1,2,\dots,n+1$ at the leaves of $t$, and labels $1',2',\dots,n'$ in sectors of $t$, so that all labels 
are ordered $1,1',2,2',\dots$ as in the definition of the Kreweras complement. See Figure~\ref{treencfig}.

Suppose first that $v\in t$ is an internal vertex, and $v\measuredangle i'$, $v\measuredangle j'$. Then it is possible to 
draw a path from sector $i'$ to sector $j'$ crossing only middle edges below $v$. This path does not cross the
arrangement of binary trees obtained from $t$ by the bijection of Proposition~\ref{bijpsttrees}, since we remove middle
edges, and it follows that $i$ and $j$ are in a same block of $\pi^c$.

Suppose then that $i$ and $j$ are in a some block of $\pi^c$, {\it i.e.}, there is a path from $i'$ to $j'$ that
does not cross the edges in $A$. So it crosses only middle edges of $t$. These middle edges are all below some vertex $v$,
because each sector can be connected to a unique internal vertex of $A$ by a path that does not cross edges of $A$ (except 
the case where these sectors have no internal vertex above, then the path crosses only middles starting from the
root of $t$).
\end{proof}

\begin{figure}[h!tp]
 \begin{tikzpicture}
   \tikzstyle{ver} = [circle, draw, fill, inner sep=0.5mm]
   \tikzstyle{edg} = [line width=0.6mm]
   \tikzstyle{edh} = [line width=0.6mm,dashed]
   \node      at (1,-0.8) {1};
   \node      at (2,-0.8) {1'};
   \node      at (3,-0.8) {2};
   \node      at (4,-0.8) {2'};
   \node      at (5,-0.8) {3};
   \node      at (6,-0.8) {3'};
   \node      at (7,-0.8) {4};
   \node      at (8,-0.8) {4'};
   \node      at (9,-0.8) {5};
   \node      at (10,-0.8) {5'};
   \node      at (11,-0.8) {6};
   \node      at (12,-0.8) {6'};
   \node      at (13,-0.8) {7};
   \draw[edg] (1,0) to (2,1);
   \draw[edg] (3,0) to (2,1);
   \draw[edg] (7,0) to (9,1);
   \draw[edg] (9,0) to (9,1);
   \draw[edg] (11,0) to (9,1);
   \draw[edg] (2,1) to (8,4);
   \draw[edg] (5,0) to (8,4);
   \draw[edg] (5,0) to (8,4);
   \draw[edg] (9,1) to (8,4);
   \draw[edg] (13,0) to (8,4);
   \draw[edh] (4,0) to[bend left=60] (6,0);
   \draw[edh] (6,0) to[bend left=60] (12,0);
   \draw[edh] (8,0) to[bend left=60] (10,0);
   \node[ver] at (2,0) {};
   \node[ver] at (4,0) {};
   \node[ver] at (6,0) {};
   \node[ver] at (8,0) {};
   \node[ver] at (10,0) {};
   \node[ver] at (12,0) {};
 \end{tikzpicture}
 \caption{The proof of Lemma~\ref{treenc}.\label{treencfig}}
\end{figure}
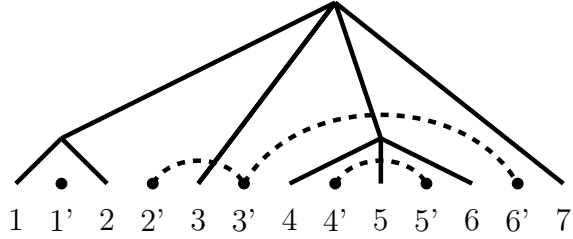

A consequence of this proposition is that the product 
$\prod_{v \measuredangle i}  a_i$ for an internal vertex $v$ of $t$, is equal to the product
$\prod_{i\in B}  a_i$ where $B$ is the appropriate block of $\pi^c$. 
So we can rewrite Equation~\eqref{eq:mixedcumulantsdef} in terms of arrangements 
of binary trees, using Equation~\eqref{signinternalvertices} for the sign:
\begin{equation} 
  \kappa[a_1,\dots,a_n] = \sum_{ A \in \mathcal{A}_n } (-1)^{n-\# A} 
      \phi_{\pi^c} [a_1,\dots,a_n].
\end{equation}
Note that the sum is over $A$ and that  $\pi$ depends on $A$ by $A\mapsto \pi$.
We can rewrite this in terms of noncrossing partitions, using Equation~\eqref{atopi}
to get the right multiplicities, which yields
\begin{equation}  \label{kappanc1}
  \kappa[a_1,\dots,a_n] = \sum_{ \pi \in {\rm NC}_n } (-1)^{n-\# \pi}
     \bigg( \prod_{B\in \pi} C_{\# B -1 }  \bigg)
  \phi_{\pi^c } [a_1,\dots,a_n].
\end{equation}

The product of Catalan numbers in this expression can be identified as a value of the 
M\"obius function of ${\rm NC}_n$, as shown by Kreweras \cite{kreweras}:
\[
  \mu(\hat 0, \pi ) = (-1)^{ n- \# \pi }  \prod_{ B \in \pi }  C_{ \#B - 1 }.
\]
We also need some further properties of the map $\pi\mapsto\pi^c$.
It is an  anti-automorphism of the poset ${\rm NC}_n$, {\it i.e.},  an order-reversing bijection. 
This shows that the interval $[\hat 0, \pi]$ is anti-isomorphic to the interval $[ \pi^c , \hat 1 ]$.
Moreover, each interval in the noncrossing partition lattice is isomorphic to a product
of noncrossing partition lattices of smaller orders, so  that each interval is a self-dual poset.
Thus, the anti-isomorphism between $[\hat 0, \pi]$ and $[ \pi^c , \hat 1 ]$ becomes an isomorphism,
when composed with some anti-automorphism.
This shows the equality of the M\"obius functions
\begin{equation}
   \mu(\hat 0, \pi ) = \mu(\pi^c,\hat 1).
\end{equation}
Back to Equation~\eqref{kappanc1}, we obtain:
\begin{equation}
   \kappa[a_1,\dots,a_n] = \sum_{\pi \in {\rm NC}_n}  \mu(\pi^c, \hat 1) \phi_{\pi^c}[a_1,\dots,a_n].
\end{equation}
We can replace $\pi^c$ by $\pi$ in the summand,  and this gives Speicher's formula.
Note that our argument remains valid for an operator valued probability, if we interpret
$\phi_\pi$ as $\hat\phi[\pi]$ in \cite[Definition 2.1.1]{Spei2} (cf. Eq. \eqref{ex:ncp}). We can also
observe that when $\phi$ is a bimodule map, any evaluation of a Schr\"oder tree can be reduced to that of
a tree in which the leftmost subtree of any internal node is a leaf. These trees, which
may be called right-directed Schr\"oder trees, are counted by the Catalan numbers,
and Speicher's definition of $\hat\phi[\pi]$ induces a particular bijection between these and non-crossing partitions.

For example, the term
\begin{equation}
\phi(a_1\phi(\phi(a_2)a_3\phi(a_4)a_5\phi(a_6))a_7)
\end{equation}
can be rewritten as the right-directed tree
\begin{equation}
\phi(a_1\phi(a_2\phi(a_3\phi(a_4)a_5\phi(a_6)))a_7)
\end{equation}
and corresponds to the non-crossing partition $\pi=15|2|35|6$.

\section{The cluster property}

Two subsets $B,C\subset A$ have the {\it cluster property} if
for any $b_1,\dots,b_j\in B$ and $c_1,\dots,c_k\in C$, we have
$\phi(b_1\cdots b_j c_1 \cdots c_k ) = \phi(b_1\cdots b_j ) \phi(c_1 \cdots
c_k ) $.
Neu and Speicher \cite{neuspeicher} have shown that this property is
characterized
by a vanishing condition on free cumulants, and this was also obtained by
Ebrahimi-Fard and Patras \cite{EFP1} with the algebraic definition of free
cumulants in terms of infinitesimal characters. We show that this property also follows
from our expression in terms of prime Schr\"oder trees, using a simple
combinatorial argument. Our proof has the advantage of being nonrecursive.

\begin{proposition}
If two subsets $B,C$ of $A$ have the cluster property, then for any $j>0$,
$k>0$,
and $b_1,\dots,b_j\in B$ and $c_1,\dots,c_k \in C$ we have
\[
  \kappa(b_1,\dots,b_j,c_1,\dots,c_k)=0.
\]
\end{proposition}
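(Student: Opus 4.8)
The plan is to read the statement off the closed formula \eqref{eq:mixedcumulantsdef} and turn the cluster property into cancellations among prime Schr\"oder trees. Set $n=j+k$ and substitute $a_i=b_i$ for $1\le i\le j$ and $a_{j+i}=c_i$ for $1\le i\le k$, so the $i$-th sector carries an element of $B$ when $i\le j$ and of $C$ when $i\ge j+1$. For an internal vertex $v$ of $t\in\PST_n$ the sectors in clear view of $v$ are listed in increasing order, so the word $\prod_{v\measuredangle i}a_i$ is always of the form (a product of $b$'s)(a product of $c$'s); call $v$ \emph{mixed} if both parts are nonempty. By the cluster property the corresponding factor of \eqref{eq:mixedcumulantsdef} then splits as $\phi(\text{the }b\text{'s})\,\phi(\text{the }c\text{'s})$, which I read combinatorially as \emph{splitting} $v$ into a vertex $v^B$ in clear view of the $B$-sectors of $v$ and a vertex $v^C$ in clear view of the $C$-sectors, with $v^B$ installed as the new leftmost child of $v^C$. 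I will check the routine points: since $v$ is mixed, $v^B$ and $v^C$ each keep at least two children, so the result lies again in $\PST_n$; it has one more internal vertex; the leaf order is untouched, so the words (hence the $\phi$-factors) of all other vertices are unchanged and no new mixed vertex appears; and $\prod_v\phi(\text{word}_v)$ is preserved.

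It follows that splitting \emph{all} mixed vertices defines, independently of the order, a map $\rho$ from $\PST_n$ to the set of \emph{resolved} trees (those with no mixed vertex), with $\prod_v\phi(\text{word}_v(t))=\prod_v\phi(\text{word}_v(\rho(t)))$. Grouping \eqref{eq:mixedcumulantsdef} according to the value of $\rho$, it suffices to prove $\sum_{\rho(t)=r}(-1)^{i(t)-1}=0$ for every resolved $r$. The fibre $\rho^{-1}(r)$ consists exactly of the trees obtained from $r$ by \emph{merging} (the inverse of splitting) at an arbitrary subset of the \emph{mergeable sites} of $r$, a mergeable site being a pair $(v^C,v^B)$ with $v^B$ the leftmost child of $v^C$, $v^B$ internal with an all-$B$ word and $v^C$ with an all-$C$ word. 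Two distinct mergeable sites of a resolved tree are vertex-disjoint (the top of a site carries an all-$C$ word, so it cannot serve as the bottom, with all-$B$ word, of another, and a vertex has a unique parent), hence merges are independent and $\rho^{-1}(r)$ is in bijection with the subsets of the set of sites of $r$, with $i(t)$ dropping by the number of merges. Thus $\sum_{\rho(t)=r}(-1)^{i(t)-1}=(-1)^{i(r)-1}(1-1)^{\#\{\text{sites of }r\}}$, which vanishes as soon as $r$ admits at least one mergeable site. (Equivalently, one may single out one canonical site of $\rho(t)$ and toggle it, producing a fixed-point-free, sign-reversing, $\phi$-value-preserving involution of $\PST_n$.)

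The crux is therefore the combinatorial claim that every resolved tree in $\PST_n$ with $j>0$ and $k>0$ has a mergeable site, and I expect the prime condition to be exactly what makes this work. To prove it I would walk along the path $p_0,p_1,\dots,p_s$ from the root $p_0$ to the leaf numbered $j+1$ (the boundary leaf, between the last $B$-sector and the first $C$-sector). For $i<s$ the word of $p_i$ equals $B^{L_i}C^{R_i}$, where $L_i$ and $R_i$ count the children of $p_i$ to the left and to the right of $p_{i+1}$; since $p_i$ is not mixed, $L_i=0$ (so $p_{i+1}$ is the leftmost child of $p_i$) or $R_i=0$ (so $p_{i+1}$ is the rightmost). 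Prime-ness rules out $R_0=0$ (the rightmost child of the root is a leaf, whereas, since $k>0$, $p_1$ is not the last leaf of $r$), so $L_0=0$ and the root has an all-$C$ word with $R_0\ge1$; and the path cannot descend through leftmost children all the way to $p_s$, for that would force the leftmost leaf of the root to be the boundary leaf $j+1\ne1$. Hence there is a least index $i^\ast\ge1$ with $R_{i^\ast}=0$; then $p_{i^\ast}$ is internal with an all-$B$ word, while $p_{i^\ast-1}$, reached through leftmost children only, has an all-$C$ word with $p_{i^\ast}$ as its leftmost child, so $(p_{i^\ast-1},p_{i^\ast})$ is a mergeable site. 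Assembling the pieces, the cluster-property splittings preserve all $\phi$-values, each fibre sum vanishes, and therefore $\kappa(b_1,\dots,b_j,c_1,\dots,c_k)=0$. The one place I expect to need genuine care is the bookkeeping of where the boundary leaf sits among the children of $p_{i^\ast-1}$ and $p_{i^\ast}$, which is what confirms that splitting and merging at that site are mutually inverse operations on reduced plane trees.
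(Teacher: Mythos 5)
Your proof is correct and follows essentially the same route as the paper's: both start from Equation~\eqref{eq:mixedcumulantsdef} and cancel terms via the identical local split/merge move at vertices seeing both $B$- and $C$-sectors (equivalently, vertices where the path from the boundary leaf crosses a middle edge), with primeness and $j,k>0$ used in the same way to guarantee that a move site always exists. The only difference is packaging: the paper toggles one canonical site to obtain a sign-reversing involution, whereas you resolve all mixed vertices and evaluate each fibre as $(-1)^{i(r)-1}(1-1)^{N}$ --- a variant you yourself note is equivalent.
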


\begin{proof}
From Equation~\eqref{eq:mixedcumulantsdef}, we have:
\begin{equation} \label{kbbbccc}
\kappa(b_1,\dots,b_j,c_1,\dots,c_k) = \sum_{T\in \PST_n} (-1)^{i(T)-1}
\prod_{v\in \ii(T)} \phi\left( \prod_{v \measuredangle i} b_i \prod_{v
\measuredangle \ell} c_\ell \right)
\end{equation}
where the indices $i$ and $\ell$ are restricted to $1\leq i \leq j$ and
 $1\leq \ell \leq k$,
and $n=j+k$.
We will give a fixed point-free involution on the set $\PST_n$, showing
that
the terms in this sum cancel pairwise, so that it vanishes.

Let $t\in \PST_n$ and label its $n$ sectors with
$b_1,\dots,b_j,c_1,\dots,c_k$.
Consider the path starting from the $(j+1)$th leaf of the tree ({\it i.e.},
the leaf between $b_j$ and $c_1$) up to the root.
(We draw this path in bold in the pictures).

Suppose first that this path goes through a middle edge of $t$, say the
$i$th
one among $p$ edges below some vertex $v$.
Then we can perform the following local move to get a new tree $u$:
\begin{equation} \label{localmove}
  \begin{tikzpicture}
   \tikzstyle{edg} = [line width=0.3mm]
   \tikzstyle{edgg} = [line width=0.8mm]
   \node at (2,1.8) {$v$};
   \draw[edg] (0,0) to (2,1.5);
   \draw[edg] (1,0) to (2,1.5);
   \draw[edg] (2,0) to (2,1.5);
   \draw[edgg] (3,0) to (2,1.5);
   \draw[edg] (4,0) to (2,1.5);
  \end{tikzpicture}
  \qquad \mapsto \qquad
  \begin{tikzpicture}
   \tikzstyle{edg} = [line width=0.3mm]
   \tikzstyle{edgg} = [line width=0.8mm]
   \node at (1.3,1.5) {$v_2$};
   \node at (2.1,1.9) {$v_1$};
   \draw[edg] (0,0) to (1.5,1);
   \draw[edg] (1,0) to (1.5,1);
   \draw[edg] (2,0) to (1.5,1);
   \draw[edgg] (3,0) to (1.5,1);
   \draw[edgg] (1.5,1) to (2.4,1.5);
   \draw[edg] (4,0) to (2.4,1.5);
  \end{tikzpicture}
\end{equation}

More formally, the corolla formed by $v$ and the $p$ edges below is
transformed
into a corolla on a vertex $v_1$ with $p-i+1$ edges below it, and to the
first one of
these edges is attached another corolla on a vertex $v_2$ with $i$ edges
($i=4$ and $p=5$
in the picture).

Denote by
$b_{\alpha_1},\dots,b_{\alpha_{i-1}},c_{\beta_1},\dots,c_{\beta_{k-i}}$ the
$k$ sectors seen from $v$.
So $b_{\alpha_1},\dots,b_{\alpha_{i-1}}$ are the sectors seen from $v_2$
and
$c_{\beta_1},\dots,c_{\beta_{k-i}}$ are those seen from $v_1$ in $u$.

In the term indexed by $t$ in the right-hand side of \eqref{kbbbccc}, we
have a factor
\begin{equation}
\phi(b_{\alpha_1},\dots,b_{\alpha_{i-1}},c_{\beta_1},\dots,c_{\beta_{k-i}}),
\end{equation} 
whereas in the term
indexed by $u$, we have instead the two factors
$\phi(b_{\alpha_1},\dots,b_{\alpha_{i-1}})$ and
$\phi(c_{\beta_1},\dots,c_{\beta_{k-i}} )$.
By the cluster property, these two terms are equal up to a sign.
Since $u$ has one more internal vertex than $t$, it contributes to the sum
in \eqref{kbbbccc}
with an opposite sign. So the two terms indexed by $t$ and $u$ cancel each
other.

It remains to see how this local move can be used to define the fixed point
free
involution. Let $t\in\PST_n$, and draw as before a path from the $(j+1)$th
leaf
up to the root. Let us follow this path from bottom to top, and stop
when finding:
\begin{itemize}
 \item either middle edge (case 1),
 \item or a right edge, followed by a left edge just above it (case 2).
\end{itemize}
Note that these two cases correspond to the two sides of
\eqref{localmove},
so that we can define the involution by performing the local move going from
one case to the other.

To see that this map is well-defined, it only remains to see that the two
cases are
exhaustive. Let $t\in \PST_n$ such that we are not in case 1, {\it i.e.}
the path
does not cross any middle edge.
Since the tree is prime, the right edge starting from the root arrives at
the rightmost leaf
(which does not separate two sectors), so the path arrives to the root by
the left edge.
Also, the path contains at least a right edge (otherwise, it would connect
the root to the
leftmost edge, which does not separate two sectors).
It follows that we can find two edges as in the right part of
\eqref{localmove},
{\it i.e.}, we are indeed in case 2.
\end{proof}


\section{The Hopf algebra of decorated Schr\"oder trees}

\subsection{}
Let $A$ be any set (decorations), and $T(A)=\K \otimes (\otimes_{n\geq 1} T_n(A))$ 
the free associative $\K$-algebra over $A$, regarded as the tensor algebra 
of the linear span of $A$.

Using the grading of $H_\SS$, we can define a decorated version of the algebra $H_\SS$ 

\begin{equation}
H_\SS(A)=\K \oplus \bigoplus_{n\geq 1} (H_{\SS,n}\otimes T_n(A)).
\end{equation}

This space has an obvious algebra structure, and  it is also easy to extend the Hopf algebra structure of $H_\SS$. 
Consider a tree $T\in H_{\SS,n} $ and $w=a_1\dots a_n$ in $T_n(A)$. Since $T$ has $n+1$ leaves, one can label its sectors
from left to right with $a_1,\dots,a_n$ and identify $T\otimes w$ with this decorated tree. For instance
\begin{equation}
\tikzset{every picture/.style={scale=0.6}}
  \begin{tikzpicture}
      \tikzstyle{ver} = [circle, draw, fill, inner sep=0.5mm]
      \tikzstyle{edg} = [line width=0.6mm]
      \node      at (1.5,-0.6) {$a_1$};
      \node      at (2.5,-0.6) {$a_2$};
      \node      at (3.5,-0.6) {$a_3$};
      \node      at (4.5,-0.6) {$a_4$};
      \node      at (5.5,-0.6) {$a_5$};
      \node      at (6.5,-0.6) {$a_6$};
      \draw[edg] (2,0) to (2.5,0.5);
      \draw[edg] (3,0) to (2.5,0.5);
      \draw[edg] (2.5,0.5) to (1.8,1);
      \draw[edg] (1,0) to (1.8,1);
      \draw[edg] (4,0) to (5,1);
      \draw[edg] (5,0) to (5,1);
      \draw[edg] (6,0) to (5,1);
      \draw[edg] (1.8,1) to (4.5,2);
      \draw[edg] (5,1) to (4.5,2);
      \draw[edg] (7,0) to (4.5,2);
  \end{tikzpicture}
\end{equation} 

In an admissible cut $c$ for such a tree, $P^c(T)$ obviously inherits the letters $a_i$ associated with the subtrees in $P^c(T)$,
 and $R^c(T)$ keeps the letters which can be viewed from the internal vertices of $T$ that are still in $R^c(T)$.  
It is clear that $H_\SS(A)$ is a Hopf algebra, and a straightforward adaptation  of the proof of Theorem \ref{th:unshundec} shows that 
\begin{theorem}
 $H_\SS(A)$ is a codendriform bialgebra.
\end{theorem}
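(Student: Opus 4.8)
The plan is to mimic the proof of Theorem~\ref{th:unshundec} essentially verbatim, verifying at each step that the decorations in $T(A)$ are merely transported and never interfere with the combinatorics governing the half-coproducts. Since the bialgebra structure of $H_\SS(A)$ has already been asserted, the only thing to establish is that the half-coproducts $\Delta^+_\prec,\Delta^+_\succ$ defined exactly as in the undecorated case still satisfy the codendriform axioms.

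First I would spell out how $\Delta^+_\prec$ and $\Delta^+_\succ$ act on $H_\SS(A)$. Given a decorated tree $T\otimes w$ with $w=a_1\cdots a_n$ placed in the sectors of $T$ from left to right, an admissible cut $c$ produces $R^c(T)$ and $P^c(T)$; as recalled just above the theorem, $P^c(T)$ inherits the letters of the subtrees grafted back, while $R^c(T)$ retains the letters visible from the internal vertices surviving in the trunk. Because the set of admissible cuts of $T$ does not depend on $w$, and the condition "the rightmost leaf of $T$ remains in $R^c(T)$" is a condition on the shape of $T$ alone, the splitting $\Delta(T\otimes w)=\Delta^+_\prec(T\otimes w)+\Delta^+_\succ(T\otimes w)+(\text{the terms }1\otimes a,\ a\otimes 1)$ is obtained from the undecorated splitting of $\Delta(T)$ simply by decorating each summand with the induced distribution of the $a_i$. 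The extension to a forest $F=T_1\cdots T_s$ is by the stated rule $\Delta^+_\prec(T_1\cdots T_s)=\Delta^+_\prec(T_1)\cdot\Delta(T_2\cdots T_s)$, and likewise for $\succ$, which is again purely formal.

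Next I would replay the coassociativity argument. We already know $\bar\Delta$ is coassociative on the augmentation ideal $H^+_\SS(A)$. Applied to a decorated forest $F=T_1\cdots T_s$, the identity $(\bar\Delta\otimes I)\circ\bar\Delta(F)=(I\otimes\bar\Delta)\circ\bar\Delta(F)$ is a sum of terms $F^{(1)}\otimes F^{(2)}\otimes F^{(3)}$ indexed by two successive nontrivial admissible cuts, each $F^{(i)}$ a decorated forest. Exactly as in the proof of Theorem~\ref{th:unshundec}, one classifies these terms according to whether the rightmost leaf of $T_1$ lies in $F^{(1)}$, $F^{(2)}$, or $F^{(3)}$; since $\bar\Delta=\Delta_\prec+\Delta_\succ$ on $H^+_\SS(A)$, the left-hand side groups into $(\Delta_\prec\otimes I)\circ\Delta_\prec$, $(\Delta_\succ\otimes I)\circ\Delta_\prec$, $(\bar\Delta\otimes I)\circ\Delta_\succ$, while the right-hand side groups into $(I\otimes\bar\Delta)\circ\Delta_\prec$, $(I\otimes\Delta_\prec)\circ\Delta_\succ$, $(I\otimes\Delta_\succ)\circ\Delta_\succ$. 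Matching the two classifications term by term yields the three codendriform relations.

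The only point needing care, and the main obstacle, is purely notational: checking that the decorations assigned to $F^{(1)},F^{(2)},F^{(3)}$ agree for whichever bracketing of the two cuts we use. This holds because a letter $a_i$ in a given sector ends up attached to the piece containing the lowest internal vertex having a clear view of that sector among the vertices not yet cut away (or to the trunk if no cut separates it). For two nested cuts this rule is manifestly associative — performing the inner cut first or the outer cut first sends $a_i$ to the same block — so the term-by-term matching above goes through with its decorations intact, and $H_\SS(A)$ is a codendriform bialgebra. No combinatorial idea beyond the proof of Theorem~\ref{th:unshundec} is required.
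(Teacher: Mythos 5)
Your proposal is correct and follows the same route as the paper, which itself gives no argument beyond asserting that the result is ``a straightforward adaptation of the proof of Theorem~\ref{th:unshundec}''; you carry out precisely that adaptation. You even supply the one point the paper leaves implicit --- that each letter $a_i$ travels with the unique internal vertex having a clear view of its sector, so the induced decorations of $F^{(1)}\otimes F^{(2)}\otimes F^{(3)}$ are independent of the order in which two nested admissible cuts are performed --- which is exactly the check needed for the undecorated argument to transport verbatim.
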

The decorated analog of Theorem \ref{th:kap} reads on characters
\begin{theorem}\label{th:kapdec}
Let $\phi$ be a linear form on $T(A)$. Extend it to a map
$\phi:\ H_\SS(A)\rightarrow \C$ sending the decorated corollas to $\phi(w)$ where $w$ is the decorating word and the other trees to 0
(regarded as an infinitesimal character of $\H_\SS(A)$), and let $\Phi$
be its extension to a character of  $\H_\SS(A)$. Then,
\begin{equation}\label{eq:kapdec}
\Phi = \epsilon + \kappa\prec \Phi
\end{equation}
where $\kappa$ is the infinitesimal character on $H_\SS(A)$ defined by
\begin{equation}
\kappa(T\otimes a_1 \dots a_n) = \begin{cases}
(-1)^{i(t)-1} \displaystyle\prod_{v\in
{\rm int}(T)} \phi\left( \prod_{v \measuredangle i}  a_i  \right) & if \ T \in \PST \\
0 & otherwise
\end{cases}
\end{equation}
where $v \measuredangle i$ means that the internal vertex
$v$ has a clear view to the $i$th sector between the $i$th and $(i+1)th$ leaves.
\end{theorem}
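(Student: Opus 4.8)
The plan is as follows. Unwinding the construction, $\Phi$ is concretely the character of $H_\SS(A)$ assigning to a decorated tree $T\otimes w$ the scalar $\phi(w)$ when $T$ is a corolla and $0$ otherwise: it is the decoration-carrying avatar of the corolla series $f_c$, read as a character of $H_\SS$. Hence \eqref{eq:kapdec} is nothing but the decorated lift of the already established character identity
\[
f_c=\epsilon+(\Res(\kappa))\prec f_c ,
\]
the character form of Theorem~\ref{th:kap}. I would prove it by evaluating both sides on the algebra generators of $H_\SS(A)$ and, tree by tree, reducing the resulting combinatorial sum to that undecorated identity. One could equally rerun the operadic proof of Theorem~\ref{th:kap}, that is the fixed-point equation $f_c=\kappa\dashv f_c$ together with its sign cancellation, inside the decorated Schr\"oder operad; the new content is the same along either route.

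\emph{Reduction to a single tree.} Since everything in sight is linear it suffices to test \eqref{eq:kapdec} on a forest $T_1\cdots T_s$ of decorated trees. As $\kappa$ is an infinitesimal character it annihilates every product of two or more trees, so, using the extension rule $\Delta^+_\prec(T_1\cdots T_s)=\Delta^+_\prec(T_1)\,\Delta(T_2\cdots T_s)$ of Theorem~\ref{th:unshundec} (the first tensor slot being forced to be a single tree, the $\Delta(T_2\cdots T_s)$-factor there must be trivial) together with the multiplicativity of $\Phi$, one gets $(\kappa\prec\Phi)(T_1\cdots T_s)=(\kappa\prec\Phi)(T_1)\,\Phi(T_2\cdots T_s)$. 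Since $\Phi$ and $\epsilon$ are multiplicative, a short induction on $s$ reduces the claim to the equality $\Phi(t\otimes w)=(\kappa\prec\Phi)(t\otimes w)$ for a single decorated tree $t\otimes w$ of positive weight, the weight-zero case being trivial. Expanding the half-convolution, $(\kappa\prec\Phi)(t\otimes w)=\sum_c\kappa(R^c(t)\otimes w_R)\,\Phi(P^c(t)\otimes w_P)$ over the $\prec$-admissible cuts $c$ of $t$; because $\Phi$ kills every non-corolla, only the cuts whose removed subtrees are \emph{all} corollas contribute, and such a cut contributes
\[
(-1)^{i(R^c(t))-1}\,[R^c(t)\in\PST]\,\Big(\prod_{v\in{\rm int}(R^c(t))}\phi\big(\prod_{v\measuredangle i}a_i\big)\Big)\prod_{C}\phi(w_C),
\]
where $C$ runs over the removed corollas and $w_C$ is its decorating word.

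\emph{The key factorisation.} The combinatorial heart of the argument is that contracting a corolla-subtree of $t$ to a single leaf does not change, for any internal vertex $v$ lying outside that corolla, the set of sectors to which $v$ has a clear view --- the vertex seeing a sector being the least common ancestor of the two leaves flanking it, which is untouched by pruning an unrelated subtree. Iterating over the removed corollas of a corolla-cut $c$, the sectors of $t$ split into those seen by the internal vertices of $R^c(t)$ (carrying there the same letters as in $t$) and those internal to the removed corollas, while ${\rm int}(R^c(t))$ is exactly ${\rm int}(t)$ with the roots of those corollas deleted. Consequently $\big(\prod_{v\in{\rm int}(R^c(t))}\phi(\prod_{v\measuredangle i}a_i)\big)\prod_{C}\phi(w_C)=\prod_{v\in{\rm int}(t)}\phi(\prod_{v\measuredangle i}a_i)$, a factor \emph{independent of $c$}. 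Pulling it out,
\[
(\kappa\prec\Phi)(t\otimes w)=\Big(\prod_{v\in{\rm int}(t)}\phi\big(\prod_{v\measuredangle i}a_i\big)\Big)\sum_c(-1)^{i(R^c(t))-1},
\]
the remaining sum running over the $\prec$-admissible cuts of $t$ all of whose removed subtrees are corollas and with $R^c(t)$ prime. But this sum is precisely $(\Res(\kappa)\prec f_c)(t)$ computed in $H_\SS$, which by the undecorated identity equals $f_c(t)=[t\text{ is a corolla}]$. When $t$ is not a corolla this gives $0=\Phi(t\otimes w)$; when $t$ is a corolla its unique internal vertex sees all sectors, so the prefactor is $\phi(w)=\Phi(t\otimes w)$. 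As $\epsilon$ vanishes on positive weight, this is exactly \eqref{eq:kapdec}.

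The main obstacle is the factorisation step: one must pin down the decorated coproduct precisely enough --- in particular the invariance of the clear-view labelling under corolla contraction --- to conclude that the decorated sum genuinely factors as a decoration-dependent scalar times the purely combinatorial undecorated sum. Everything else is either formal (the reduction, using only that $\kappa$ is an infinitesimal character and $\Phi$ a character) or a direct appeal to the already-proved identity $f_c=\epsilon+(\Res(\kappa))\prec f_c$.
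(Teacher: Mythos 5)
Your proposal is correct and follows essentially the same route as the paper: both evaluate $\kappa\prec\Phi$ on a single decorated tree, observe that only cuts removing bottom corollas contribute, use the key fact that the decoration-dependent product $\prod_{v}\phi(\prod_{v\measuredangle i}a_i)$ is unchanged under contracting those corollas (hence independent of the cut), and reduce to a sign cancellation. The only cosmetic difference is that you delegate that cancellation to the already-proved undecorated identity $f_c=\epsilon+\Res(\kappa)\prec f_c$, whereas the paper redoes it directly as a binomial expansion $(1-1)^n$; your write-up is, if anything, more explicit about the reduction to a single tree and the factorisation step.
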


\Proof As in \cite{EFP1}, $\Phi$ is necessarily a character. 
If for a decorated tree $T\otimes a_1 \dots a_n$, $T$ is not in $\PST$, the r.h.s of (\ref{eq:kapdec}) is necessarily 0. 
Otherwise, the right-hand side of (\ref{eq:kapdec}) reads as the binomial expansion of  
\begin{equation}
(1-1)^n \prod_{v\in
\ii(T)} \phi\left( \prod_{v \measuredangle i}  a_i  \right)
\end{equation}
where
$n$ is the number of its internal nodes whose descendants are leaves.
Thus, only the  corollas survive, with value $\phi(a_1\dots a_n)$. \qed

When summing over the different prime Schr\"oder trees decorated by the same word, one recovers Speicher's formula 
\begin{equation}
\kappa_n[a_1,\dots,a_n] = 
\sum_{t\in \PST_n} (-1)^{i(t)-1} \prod_{v\in
\ii(T)} \phi\left( \prod_{v \measuredangle i}  a_i  \right).
\end{equation}
Thus, in addition to the combinatorial argument of Section \ref{sec:spei},
we see that this expression can be recovered algebraically, with the help of morphisms of codendriform bialgebras.

\subsection{The formula of Ebrahimi-Fard and Patras}

In \cite{EFP1} and \cite{EFP},
free cumulants appear as the solution of a dendriform equation for characters of $T(T_{\geq 1}(A))$. 
The latter algebra is the free associative algebra over words $w$ in $T_{\geq 1}(A)$, so that its elements can
be viewed as linear combinations of segmented words $w_1 | \dots | w_s$.   
Let us just recall some results of \cite{EFP,EFP1}, 
the definition of an codendriform bialgebra having already been recalled in Theorem \ref{th:unshundec}. 

The algebra $T(T_{\geq 1}(A))$ is endowed with a coproduct  defined on words as follows. 
Consider a word $w=a_1 \dots a_n$ as a ladder tree whose vertices are decorated from the root to the leaf by $a_1,\dots,a_n$. 
For any subset $S=\lbrace i_1<\dots<i_k \rbrace$ of $\lbrace 1, \dots , n\rbrace $, 
denote by $R^S(w)=a_{i_1}\dots a_{i_k}=a_S$  the ladder labeled with the letters $a_i$ corresponding to the subscripts in $S$.

Once these vertices are removed from the original ladder, there remains some ladders which define a 
segmented word $P^S(w)$,
the  tensor (bar) product of these connected components. 
That is, $P^S(w)=w_S^1 | \dots|w_S^l $ where, in each $w_S^j$, the subscripts of the letters in each factor are consecutive integers,
but the union of the subscripts of two consecutive  factors is not an interval. 

It is proved in \cite{EFP1} that with the coproduct defined on words by 
\begin{equation}
\Delta(w)=\Delta (a_1\dots a_n)=\sum_{S\subset \lbrace 1, \dots , n\rbrace}   R^S(w)\otimes P^S(w),
\end{equation}
$T(T_{\geq 1}(A))$ is a Hopf algebra which is also a codendriform algebra for the splitting
of the coproduct
\begin{equation}
\Delta = \Delta^+_\succ + \Delta^+_\prec
\end{equation}
defined on words $w=a_1\dots a_n$ in $T_{\geq 1}(A)$ by
\begin{equation}
\Delta^+_\prec(a_1\dots a_n) = \sum_{S\subset \lbrace 1, \dots , n\rbrace \ ,\ n\in S} R^S(w)\otimes P^S(w).
\end{equation}

The relation between free moments and free cumulants is then given by the same equation as in Theorem \ref{th:kapdec}, 
relating a linear map $\phi$ defined on words of $T_{\geq 1}(A)$ and its extension $\tilde\Phi$ as a character of $T(T_{\geq 1}(A))$
to the free cumulants (defining an infinitesimal character $\tilde{\kappa}$ on  $T(T_{\geq 1}(A))$) 
by the same equation as in Theorem \ref{th:kapdec}. This also proves  Speicher's formula,
which can thus be interpreted in terms of morphisms  of codendriform bialgebras. 

Note that in \cite{EFP1}, the codendriform bialgebra structure is defined by splitting the coproduct
according to whether 
 $1$ is in $S$ instead of  $n$ for us. 
Both structures are obviously isomorphic under reversal of the words 
$a_1\dots a_n \in T_{\geq 1}(A)\mapsto a_n\dots a_1$.

%
%

\subsection{ A codendriform Hopf morphism}


\begin{theorem}
Let $\iota$ be the algebra morphism $T(T_{\geq 1}(A)) \rightarrow H_\SS(A)$ sending a
word $w=a_1\cdots a_n$ to the sum of all trees with $n$ sectors decorated from
left to right by $a_1,\ldots,a_n$. Then,\\
(i) $\iota$ is a coalgebra morphism;\\
(ii) $\iota$ is a codendriform morphism: 
\begin{equation}
(\iota\otimes\iota) \circ \Delta_\prec(w) = \Delta_\prec\circ\iota(w).
\end{equation}
\end{theorem}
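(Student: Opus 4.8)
The plan is to prove (i) first and then deduce (ii), since the codendriform compatibility should fall out once the coalgebra statement is established together with a bookkeeping of where the "last letter" goes. Throughout I would use the description of the coproducts already recalled in the excerpt: on $T(T_{\geq 1}(A))$ the coproduct $\Delta(w)$ sums over subsets $S$ of positions of the ladder, with $R^S(w)$ the extracted ladder and $P^S(w)$ the segmented word of the remaining runs; on $H_\SS(A)$ the coproduct sums over admissible cuts $c$, with $R^c(T)$ the trunk and $P^c(T)$ the forest of cut subtrees, the decorations distributed so that $P^c$ keeps the letters of its sectors and $R^c$ keeps the letters seen from the surviving internal vertices.

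\textbf{Proof of (i).} Since $\iota$ is by definition an algebra morphism and both $\Delta$'s are algebra morphisms, it suffices to check $(\iota\otimes\iota)\circ\Delta(w) = \Delta\circ\iota(w)$ on a single word $w=a_1\cdots a_n$. The right-hand side is $\sum_{T} \sum_{c} R^c(T)\otimes P^c(T)$, the outer sum over Schröder trees $T$ with $n$ sectors decorated by $a_1,\ldots,a_n$ and the inner sum over admissible cuts of $T$. The plan is to set up a bijection: to a pair $(T,c)$ associate the subset $S\subset\{1,\ldots,n\}$ consisting of those sectors of $T$ that are visible from an internal vertex lying in the trunk $R^c(T)$ — equivalently, the sectors that survive in $R^c(T)$. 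One checks that $R^c(T)$, as a decorated Schröder tree on the sectors indexed by $S$, is an arbitrary Schröder tree on those sectors, and that $P^c(T)$ is an arbitrary forest whose sectors partition $\{1,\ldots,n\}\setminus S$ into the runs (maximal intervals) dictated by the combinatorics of admissible cuts — precisely the runs appearing in $P^S(w)$. Conversely, given $S$ and arbitrary trees filling $R^S(w)$ and the factors of $P^S(w)$, one reconstructs $(T,c)$ by grafting. Hence summing over all $(T,c)$ reproduces exactly $\sum_S \iota(R^S(w))\otimes\iota(P^S(w)) = (\iota\otimes\iota)\circ\Delta(w)$. The only genuinely delicate point is verifying that the pattern of maximal consecutive-index runs in $P^c(T)$ matches the segmentation of $P^S(w)$: this is where one uses that a cut subtree occupies a contiguous block of sectors of $T$, that two cut subtrees sharing a parent edge-wise give non-adjacent blocks after the trunk is collapsed, and that "$R^c$ contains at least the sectors needed to see the root configuration". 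I expect this run-matching to be the main obstacle, and it is best handled by induction on the number of internal vertices of the trees being grafted (or on $n$), peeling off the root corolla.

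\textbf{Proof of (ii).} Recall $\Delta_\prec^+$ on $H_\SS(A)$ keeps those admissible cuts for which the rightmost leaf of $T$ stays in $R^c(T)$ (equivalently the rightmost subtree of $P^c(T)$ avoids the rightmost leaf), while $\Delta_\prec^+$ on $T(T_{\geq 1}(A))$ keeps those subsets $S$ with $n\in S$. Under the bijection of part (i), the index $n\in S$ means the last sector $a_n$ of the word survives in the trunk. Now $\iota(w)$ is a sum of Schröder trees $T$ each with $n$ sectors, and in each such $T$ the last sector is the one immediately to the left of the rightmost leaf; so "$a_n$ survives in $R^c(T)$" is exactly the condition "the rightmost leaf of $T$ lies in $R^c(T)$" that defines $\Delta_\prec^+$ on the tree side. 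Therefore the bijection restricts to a bijection between the $\prec$-parts, giving $(\iota\otimes\iota)\circ\Delta_\prec^+(w) = \Delta_\prec^+\circ\iota(w)$; subtracting from the identity of part (i) gives the same for $\Delta_\succ^+$, and passing to the reduced coproducts yields the stated $(\iota\otimes\iota)\circ\Delta_\prec(w) = \Delta_\prec\circ\iota(w)$. Once the bijection of (i) is in hand this step is essentially immediate — it is a matter of tracking which side of the tensor the last letter lands on — so no new obstacle arises here beyond being careful that the rightmost leaf of a Schröder tree is not counted as a sector (as noted in the excerpt) and hence does not interfere with the position of $a_n$.
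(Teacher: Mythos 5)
Your argument is correct and is essentially the paper's own proof: the bijection you describe between pairs $(T,c)$ and the data (subset $S$, a tree grafted onto $R^S(w)$, and trees grafted onto the factors of $P^S(w)$) is exactly the paper's identification of $\bigcup_T A_T(w)$ with $A(w)$, and your treatment of (ii) by tracking whether $a_n$ (equivalently the rightmost leaf) survives in the trunk matches the paper's restriction to the $\prec$-parts. The only difference is presentational: the paper resolves the delicate matching/uniqueness point by directly reconstructing the tree and cut from the decorations (assumed pairwise distinct), rather than by the induction on internal vertices you propose.
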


This result implies the formula for free cumulants, as $\iota$ send the maps $\Phi$ and $\kappa$ of Theorem \ref{th:kapdec} 
to the free moments $\tilde{\Phi}=\phi\circ \iota$ and cumulants $\tilde{\kappa}=\kappa \circ \iota$ of \cite{EFP1}. 

\begin{proof} Consider a word $w=a_1 \dots a_n$, $n\geq 1$. We can assume without loss of generality that the decorations are pairwise distinct. We must first prove that
\begin{equation}
\Delta \circ \iota(w)=(\iota \otimes \iota)\circ \Delta (w).
\end{equation}

For any subset $S$ of $\{ 1,\dots, n\}$, there is a unique term 
\begin{equation}
R^S(w)\otimes P^S(w)=a_S\otimes w_S^1 | \dots|w_S^l 
\end{equation}
corresponding to $S$ in $\Delta(w)$. 
We shall say that a sequence of trees $(T_0,T_1,\dots, T_k)$ is compatible with $S$ (and write $S\sim (T_0,T_1,\dots, T_k)$) 
if $k=l$, $\wt(T_0)=|S|$ and for all $1\leq i\leq l$, $\wt(T_i)=l(w_S^i)$. 
By definition of $\iota$,  the set
\begin{equation}
A(w)=\left\lbrace(T_0; T_1\dots T_k ; R^S(w) ; P^S(w) ), \ S\subset\{1,\dots,n\} ;\ S\sim(T_0,T_1,\dots, T_k) \right\rbrace
\end{equation}
is such that 
\begin{equation}
(\iota \otimes \iota)\circ \Delta (w)=\sum_{(T_0,T_1\dots T_k ; w^0 ;w^1 |\dots |w^k)\in A(w)} (T_0\otimes w^0) \otimes \prod (T_i \otimes w^i).
\end{equation}

Let $c$ be an admissible cut of a tree
$T\in \PT_{n+1}$. 
For the decorated tree $T\otimes a_1...a_n$ the decorations of $R^c(T)$ correspond to a unique set 
$S(c)=\lbrace i_1<\dots<i_k \rbrace$ of $\lbrace 1, \dots , n\rbrace $,
so that in the coproduct, we find $R^c(T)\otimes R^{S(c)}(w)=R^c(T)\otimes a_{S(c)}$ on the left-hand side of 
the term corresponding to this cut. 
On the right-hand side, if $P^c(T)=T_1\dots T_k$, the corresponding decorations are 
the connected components of $P^{S(c)}(w)=w_{S(c)}^1 | \dots|w_{S(c)}^{k}$: 
the subscripts of each $w_{S(c)}^i$ form an interval  whose length is the weight of $T_i$. 
Now,
one can observe that, for any tree $T$, the set 
\begin{equation}
A_T(w)=\left\lbrace (R_c(T) ; P^c(T) ; R^{S(c)}(w) ; P^{S(c)}(w)) \ , c\in \cut(T) \right\rbrace
\end{equation}
is a subset of $A(w)$ such that
\begin{equation}
\Delta (T\otimes w)=\sum_{(T_0,T_1\dots T_k ; w^0 ;w^1 |\dots |w^k)\in A_T(w)} (T_0\otimes w^0) \otimes \prod (T_i \otimes w^i).
\end{equation}
There could be an ambiguity in this formula if there were repeated elements  in the definition of  $A_T(w)$ but this is never  the case. 
For a given tree $T$, there maybe several cuts that give rise to the same undecorated term but the cuts would be at different positions and the induced decorations would be different.

Similarly, if we consider two different trees $T,T' \in \PT_{n+1}$, 
then $A_T(w) \cap A_{T'}(w)=\emptyset$. 
There could be  admissible cuts $c$ and $c'$  giving the same undecorated terms in the coproducts of $T$ and $T'$,
but if the associated decorations were the same, the initial ordering of the subscripts in $a_1 \dots a_n$ 
would impose that $T\otimes w= T'\otimes w$. 
For instance, if the first interleaves of $R^c(T)=R^{c'}(T')$ have the same decorations, 
say $a_1$ and $a_5$, then the first factor of $P^c(T)=P^{c'}(T')$ is decorated by $a_2 a_3 a_4$ 
and the subtree from which one sees $a_1 a_2 a_3 a_4 a_5$ is the same in $T\otimes w$ and $T' \otimes w$. 
Furthermore, due to the order of the letters in $w=a_1  \dots a_n$, any given element in $\cup_{T} A_T(w)$
appears in only one of the subsets $A_T(w)$ and corresponds to a unique admissible cut of $T$.

Thus, the inclusion map from the disjoint union $\cup_{T} A_T(w)$ to $A(w)$ is injective. 

In then same way, one can check that any element of $A(w)$ occurs in a unique $A_T(w)$. 
As before, one can recover the unique tree and the unique cut such that the letters are in the correct order in $T\otimes w$. 
This means that the inclusion map is actually a bijection:
\begin{eqnarray}
(\iota \otimes \iota)\circ \Delta (w)&=& \sum_{(T_0,T_1\dots T_k ; w^0 ;w^1 |\dots |w^k)\in A(w)} (T_0\otimes w^0) \otimes \prod (T_i \otimes w^i) \\
 &=&\sum_T \sum_{(T_0,T_1\dots T_k ; w^0 ;w^1 |\dots |w^k)\in A_T(w)} (T_0\otimes w^0) \otimes \prod (T_i \otimes w^i)  \\
 &=& \sum_T \Delta(T\otimes w) \\
 &=& \Delta \circ \iota (w).
\end{eqnarray}

The proof for the codendriform structure is similar, noticing that, under restriction  to the  $A^\prec(w) \subset A(w)$ 
(resp. $A_T^\prec(w)\subset A_T(w)$) corresponding to the half coproducts, we still have a bijection between $A^\prec(w)$ and the decomposition $\cup_{T} A_T^\prec(w)$ 
associated with the equation
$$
(\iota\otimes\iota) \circ \Delta_\prec(w) = \Delta_\prec\circ\iota(w).
$$
\end{proof}




\section{Miscellaneous remarks}

We have seen that the large Schr\"oder numbers can be expressed as  sums of
products of Catalan numbers:
\begin{equation}
  \# \PST_n = \sum_{\pi \in \NC_n} \prod_{b\in \pi} C_{\#b - 1}.
\end{equation}
This identity appeared already in the paper by Dykema \cite[Corollary~8.4]{Dy},
where it is related to other combinatorial objects called noncrossing linked
partitions.
Dykema's work being also related to operator-valued free probability theory, it is
desirable to look for an explanation of this coincidence. To grasp a better understanding
of what is happening,
note that in each case, the Schr\"oder numbers appear with a multivariate
refinement.
Our  version is obtained when we remove all signs in the expression
of the free cumulants in terms of the moments $(m_i)_{i\geq 1}$ (that we regard as
indeterminates).
The first values are:
\begin{align}
    & m_1, \\
    & m_1^2 + m_2, \\
    & 2m_1^3 + 3m_2m_1 + m_3, \\
    & 5m_1^4 + 6m_2m_1^2 + 2m_2^2 + 8m_3m_1  + m_4.
\end{align}
On another hand, the multivariate refinements of Schr\"oder numbers of
\cite{Dy}
are obtained by expressing moments in terms of the coefficients of the
T-transform,
see Proposition 8.1 there. The first values are 
\begin{align}
    & \alpha_0, \\
    & \alpha_0^2 + \alpha_0\alpha_1, \\
    & \alpha_0^3 + 3 \alpha_0^2 \alpha_1 + \alpha_0\alpha_1^2 + \alpha_0^2
\alpha_2, \\
    & \alpha_0^4 + 6\alpha_0^3 \alpha_1 + 6\alpha_0^2\alpha_1^2 +
4\alpha_0^3\alpha_2 + \alpha_0 \alpha_1^3 + 3 \alpha_0^2\alpha_1\alpha_2 +
\alpha_0^3\alpha_3.
\end{align}
There does not seem to be any clear link
between these families of polynomials.

\footnotesize

\end{document}